\begin{document}
\theoremstyle{plain}
\newtheorem{thm}{Theorem}[section]
\newtheorem*{thm1}{Theorem 1}
\newtheorem*{thm1.1}{Theorem 1.1}
\newtheorem*{thmM}{Main Theorem}
\newtheorem*{thmA}{Theorem A}
\newtheorem*{thm2}{Theorem 2}
\newtheorem{lemma}[thm]{Lemma}
\newtheorem{lem}[thm]{Lemma}
\newtheorem{cor}[thm]{Corollary}
\newtheorem{pro}[thm]{Proposition}
\newtheorem{propose}[thm]{Proposition}
\newtheorem{variant}[thm]{Variant}
\theoremstyle{definition}
\newtheorem{notations}[thm]{Notations}
\newtheorem{rem}[thm]{Remark}
\newtheorem{rmk}[thm]{Remark}
\newtheorem{rmks}[thm]{Remarks}
\newtheorem{defi}[thm]{Definition}
\newtheorem{exe}[thm]{Example}
\newtheorem{claim}[thm]{Claim}
\newtheorem{ass}[thm]{Assumption}
\newtheorem{prodefi}[thm]{Proposition-Definition}
\newtheorem{que}[thm]{Question}
\newtheorem{con}[thm]{Conjecture}

\newtheorem*{smlprob}{Problem Skolem-Mahler-Lech}
\newtheorem*{pvprob}{Problem Picard-Vessiot}
\newtheorem*{tvcon}{Tate-Voloch Conjecture}
\newtheorem*{dmmcon}{Dynamical Manin-Mumford Conjecture}
\newtheorem*{dmlcon}{Dynamical Mordell-Lang Conjecture}
\newtheorem*{condml}{Dynamical Mordell-Lang Conjecture}
\numberwithin{equation}{section}
\newcounter{elno}                
\def\points{\list
{\hss\llap{\upshape{(\roman{elno})}}}{\usecounter{elno}}}
\let\endpoints=\endlist

\newcommand{\lra}{{\longrightarrow}}
\newcommand{\dra}{{\dashrightarrow}}
\newcommand{\Gr}{{\rm Gr}}
\newcommand{\GO}{{\rm GO}}
\newcommand{\Fan}{{(\F^{\an})}}
\newcommand{\Phian}{{(\Phi^{\an})}}
\newcommand{\lcm}{{\rm lcm}}
\newcommand{\Tor}{{\rm Tor}}
\newcommand{\perf}{{\rm perf}}
\newcommand{\ad}{{\rm ad}}
\newcommand{\Spa}{{\rm Spa}}
\newcommand{\Perf}{{\rm Perf}}
\newcommand{\alHom}{{\rm alHom}}
\newcommand{\SH}{\rm SH}
\newcommand{\Tan}{\rm Tan}
\newcommand{\res}{{\rm res}}
\newcommand{\Om}{\Omega}
\newcommand{\om}{\omega}
\newcommand{\OO}{\mathcal{O}}
\newcommand{\la}{\lambda}
\newcommand{\mc}{\mathcal}
\newcommand{\mb}{\mathbb}
\newcommand{\surj}{\twoheadrightarrow}
\newcommand{\inj}{\hookrightarrow}
\newcommand{\zar}{{\rm zar}}
\newcommand{\Exc}{\rm Exc}
\newcommand{\an}{{\rm an}}
\newcommand{\red}{{\rm \mathbf{red}}}
\newcommand{\codim}{{\rm codim}}
\newcommand{\Supp}{{\rm Supp}}
\newcommand{\rank}{{\rm rank}}
\newcommand{\Ker}{{\rm Ker \ }}
\newcommand{\Pic}{{\rm Pic}}
\newcommand{\Div}{{\rm Div}}
\newcommand{\Hom}{{\rm Hom}}
\newcommand{\im}{{\rm im}}
\newcommand{\Spec}{{\rm Spec \,}}
\newcommand{\Nef}{{\rm Nef \,}}
\newcommand{\Frac}{{\rm Frac \,}}
\newcommand{\Sing}{{\rm Sing}}
\newcommand{\sing}{{\rm sing}}
\newcommand{\reg}{{\rm reg}}
\newcommand{\Char}{{\rm char}}
\newcommand{\Tr}{{\rm Tr}}
\newcommand{\ord}{{\rm ord}}
\newcommand{\id}{{\rm id}}
\newcommand{\NE}{{\rm NE}}
\newcommand{\Gal}{{\rm Gal}}
\newcommand{\Min}{{\rm Min \ }}
\newcommand{\Max}{{\rm Max \ }}
\newcommand{\Alb}{{\rm Alb}\,}
\newcommand{\GL}{{\rm GL}}        
\newcommand{\PGL}{{\rm PGL}\,}
\newcommand{\Bir}{{\rm Bir}}
\newcommand{\Aut}{{\rm Aut}}
\newcommand{\End}{{\rm End}}
\newcommand{\Per}{{\rm Per}\,}
\newcommand{\ie}{{\it i.e.\/},\ }
\newcommand{\niso}{\not\cong}
\newcommand{\nin}{\not\in}
\newcommand{\soplus}[1]{\stackrel{#1}{\oplus}}
\newcommand{\by}[1]{\stackrel{#1}{\rightarrow}}
\newcommand{\longby}[1]{\stackrel{#1}{\longrightarrow}}
\newcommand{\vlongby}[1]{\stackrel{#1}{\mbox{\large{$\longrightarrow$}}}}
\newcommand{\ldownarrow}{\mbox{\Large{\Large{$\downarrow$}}}}
\newcommand{\lsearrow}{\mbox{\Large{$\searrow$}}}
\renewcommand{\d}{\stackrel{\mbox{\scriptsize{$\bullet$}}}{}}
\newcommand{\dlog}{{\rm dlog}\,}    
\newcommand{\longto}{\longrightarrow}
\newcommand{\vlongto}{\mbox{{\Large{$\longto$}}}}
\newcommand{\limdir}[1]{{\displaystyle{\mathop{\rm lim}_{\buildrel\longrightarrow\over{#1}}}}\,}
\newcommand{\liminv}[1]{{\displaystyle{\mathop{\rm lim}_{\buildrel\longleftarrow\over{#1}}}}\,}
\newcommand{\norm}[1]{\mbox{$\parallel{#1}\parallel$}}
\newcommand{\boxtensor}{{\Box\kern-9.03pt\raise1.42pt\hbox{$\times$}}}
\newcommand{\into}{\hookrightarrow}
\newcommand{\image}{{\rm image}\,}
\newcommand{\Lie}{{\rm Lie}\,}      
\newcommand{\CM}{\rm CM}
\newcommand{\sext}{\mbox{${\mathcal E}xt\,$}}  
\newcommand{\shom}{\mbox{${\mathcal H}om\,$}}  
\newcommand{\coker}{{\rm coker}\,}  
\newcommand{\sm}{{\rm sm}}
\newcommand{\pgcd}{\text{pgcd}}
\newcommand{\trd}{\text{tr.d.}}
\newcommand{\tensor}{\otimes}
\renewcommand{\iff}{\mbox{ $\Longleftrightarrow$ }}
\newcommand{\supp}{{\rm supp}\,}
\newcommand{\ext}[1]{\stackrel{#1}{\wedge}}
\newcommand{\onto}{\mbox{$\,\>>>\hspace{-.5cm}\to\hspace{.15cm}$}}
\newcommand{\propsubset}
{\mbox{$\textstyle{
\subseteq_{\kern-5pt\raise-1pt\hbox{\mbox{\tiny{$/$}}}}}$}}
\newcommand{\sA}{{\mathcal A}}
\newcommand{\sB}{{\mathcal B}}
\newcommand{\sC}{{\mathcal C}}
\newcommand{\sD}{{\mathcal D}}
\newcommand{\sE}{{\mathcal E}}
\newcommand{\sF}{{\mathcal F}}
\newcommand{\sG}{{\mathcal G}}
\newcommand{\sH}{{\mathcal H}}
\newcommand{\sI}{{\mathcal I}}
\newcommand{\sJ}{{\mathcal J}}
\newcommand{\sK}{{\mathcal K}}
\newcommand{\sL}{{\mathcal L}}
\newcommand{\sM}{{\mathcal M}}
\newcommand{\sN}{{\mathcal N}}
\newcommand{\sO}{{\mathcal O}}
\newcommand{\sP}{{\mathcal P}}
\newcommand{\sQ}{{\mathcal Q}}
\newcommand{\sR}{{\mathcal R}}
\newcommand{\sS}{{\mathcal S}}
\newcommand{\sT}{{\mathcal T}}
\newcommand{\sU}{{\mathcal U}}
\newcommand{\sV}{{\mathcal V}}
\newcommand{\sW}{{\mathcal W}}
\newcommand{\sX}{{\mathcal X}}
\newcommand{\sY}{{\mathcal Y}}
\newcommand{\sZ}{{\mathcal Z}}
\newcommand{\A}{{\mathbb A}}
\newcommand{\B}{{\mathbb B}}
\newcommand{\C}{{\mathbb C}}
\newcommand{\D}{{\mathbb D}}
\newcommand{\E}{{\mathbb E}}
\newcommand{\F}{{\mathbb F}}
\newcommand{\G}{{\mathbb G}}
\newcommand{\HH}{{\mathbb H}}
\newcommand{\I}{{\mathbb I}}
\newcommand{\J}{{\mathbb J}}
\newcommand{\M}{{\mathbb M}}
\newcommand{\N}{{\mathbb N}}
\renewcommand{\P}{{\mathbb P}}
\newcommand{\Q}{{\mathbb Q}}
\newcommand{\R}{{\mathbb R}}
\newcommand{\T}{{\mathbb T}}
\newcommand{\U}{{\mathbb U}}
\newcommand{\V}{{\mathbb V}}
\newcommand{\W}{{\mathbb W}}
\newcommand{\X}{{\mathbb X}}
\newcommand{\Y}{{\mathbb Y}}
\newcommand{\Z}{{\mathbb Z}}

\newcommand{\fix}{\mathrm{Fix}}

\title{Algebraic dynamics of skew-linear self-maps} 

\author{Dragos Ghioca}
\address{
Dragos Ghioca\\
Department of Mathematics\\
University of British Columbia\\
Vancouver, BC V6T 1Z2\\
Canada
}
\email{dghioca@math.ubc.ca}

\author{Junyi Xie}
\address{Junyi Xie, IRMAR, Campus de Beaulieu,
b\^atiments 22-23
263 avenue du G\'en\'eral Leclerc, CS 74205
35042  RENNES C\'edex}
\email{junyi.xie@univ-rennes1.fr}


\begin{abstract}
Let $X$ be a variety defined over an algebraically closed field $k$ of characteristic $0$, let $N\in\N$, let $g:X\dra X$ be a dominant rational self-map, and let $A:\A^N\lra \A^N$ be a linear transformation defined over $k(X)$, i.e., for a Zariski open dense subset $U\subset X$, we have that for $x\in U(k)$, the specialization $A(x)$ is an $N$-by-$N$ matrix with entries in $k$. We let $f:X\times \A^N\dra X\times \A^N$ be the rational endomorphism given by $(x,y)\mapsto (g(x), A(x)y)$. We prove that if the determinant of $A$ is nonzero and if there exists $x\in X(k)$ such that its orbit $\OO_g(x)$ is Zariski dense in $X$, then either there exists a point $z\in (X\times \A^N)(k)$ such that its orbit $\OO_f(z)$ is Zariski dense in $X\times \A^N$ or there exists a nonconstant rational function $\psi\in k(X\times \A^N)$ such that $\psi\circ f=\psi$. Our result provides additional evidence to a conjecture of Medvedev and Scanlon. 
\end{abstract}

\thanks{The first author is partially supported by a Discovery Grant from the National Sciences and Engineering Research Council of Canada, while the second author is partially supported by project ``Fatou'' ANR-17-CE40-0002-01.}
\thanks{2010 AMS Subject Classification: Primary 37P15; Secondary 37P05.}

\maketitle

\section{Introduction}

\subsection{Notation}

We let $\N_0:=\N\cup\{0\}$.  
Throughout our paper, we let $k$ be an algebraically closed field of characteristic $0$. Also, unless otherwise noted, all our subvarieties are assumed to be closed. In general, for a set $S$ contained in an algebraic variety $X$, we denote by $\overline{S}$ its Zariski closure.

For a variety $X$ defined over $k$ and endowed with a rational self-map $\Phi$, for any subvariety $V\subseteq X$, we define $\Phi(V)$ be the Zariski closure of the set $\Phi\left(V\setminus I(\Phi)\right)$, where $I(\Phi)$ is the indeterminacy locus of $\Phi$; in other words, $\Phi(V)$ is the strict transform of $V$ under $\Phi$. Also, we denote by $\OO_\Phi(\alpha)$ the orbit of any point $\alpha\in X(K)$ under $\Phi$, i.e., the set of all $\Phi^n(\alpha)$ for $n\in \N_0$ (as always in algebraic dynamics, we denote by $\Phi^n$ the $n$-th compositional power of the map $\Phi$, where $\Phi^0$ is the identity map, by convention). We say that $\alpha$ is periodic if there exists $n\in\N$ such that $\Phi^n(\alpha)=\alpha$; furthermore, the smallest positive integer $n$ such that $\Phi^n(\alpha)=\alpha$ will be called the \emph{period} of $\alpha$. We say that $\alpha$ is preperiodic if there exists $m\in\N_0$ such that $\Phi^m(\alpha)$ is periodic. More generally, for an irreducible subvariety $V\subset X$, we say that $V$ is periodic if $\Phi^n(V)= V$ for some $n\in\N$; if $\Phi(V)=V$ (i.e., $\overline{\Phi\left(V\setminus I(\Phi)\right)}=V$), we say that $V$ is invariant under the action of $\Phi$ (or simpler, invariant by $\Phi$).

We will also encounter the following setup in our paper. Given a variety $X$ defined over $k$ and given $N\in\N$, we consider some $N$-by-$N$ matrix $A$ whose entries are rational functions on $X$; when the determinant of $A$ is  nonzero, then we write $A\in \GL_N(k(X))$. For any $N$-by-$N$ matrix $A\in M_{N,N}(k(X))$ there exists an open, Zariski dense subset $U\subset X$ such that for each $x\in U$, the matrix $A(x)$ obtained by evaluating each entry of $A$ at $x$ is well-defined. We call \emph{skew-linear self-map} a rational self-map $f:X\times \mathbb{A}^N\dashrightarrow X\times \mathbb{A}^N$ of the form $f(x, y)=(g(x), A(x)y)$, where $g:X\dashrightarrow X$ is a given rational self-map, while $A\in M_{N,N}(k(X))$.


\subsection{Zariski dense orbits}


The following conjecture was proposed by Medvedev and Scanlon \cite[Conjecture 5.10]{Medvdevv1} (and independently, by Amerik and Campana \cite{Amerik2008}); see also Zhang's \cite[Conjecture~4.1.6]{Zhang} regarding Zariski dense orbits 
for polarizable endomorphisms which motivated the aforementioned conjecture.

\begin{con}\label{conexistszdo}
Let $X$ be a quasiprojective variety over $k$ and $f:X\dashrightarrow X$ be a dominant rational self-map for which there exists no nonconstant
rational function $\psi\in k(X)$ such that $\psi\circ f=\psi$.
Then there exists a point $x\in X(k)$ whose orbit is Zariski dense in $X$.
\end{con}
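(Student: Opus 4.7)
The plan is to prove the contrapositive: assuming no $x\in X(k)$ has Zariski-dense $f$-orbit, produce a nonconstant $\psi\in k(X)$ with $\psi\circ f=\psi$. First I would reduce to the case where $k$ is uncountable, by spreading $(X,f)$ out over a finitely generated subring of $k$ and specializing to a large algebraically closed overfield, and by compactification to $X$ projective. Under the assumption, every $x\in X(k)$ lies on some proper irreducible subvariety $V_x\subsetneq X$ with $f^{n(x)}(V_x)=V_x$, obtained as the periodic part of an appropriate irreducible component of $\overline{\OO_f(x)}$.

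The central step is to assemble these periodic subvarieties into an algebraic family. Stratify the Chow scheme of $X$ by dimension, degree, and period: for each triple $(d,m,n)$ let $\mathcal{P}_{d,m,n}$ be the locus of irreducible $m$-dimensional subvarieties of degree $\leq d$ that are fixed by $f^n$. Then
\[
X(k)=\bigcup_{(d,m,n)\in\N^3}\,\bigcup_{V\in\mathcal{P}_{d,m,n}}V(k),
\]
a countable union. Since $k$ is uncountable, for some $(d,m,n)$ the inner union must be Zariski-dense in $X$, and I would argue this forces $\mathcal{P}_{d,m,n}$ to contain a positive-dimensional irreducible component $T$. The incidence correspondence then yields a rational map $\pi:X\dra T$ sending a generic $x$ to the unique $V\in T$ through $x$. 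The natural action $V\mapsto f(V)$ on the Chow scheme gives an algebraic self-map $\sigma$ of an irreducible component containing $T$, and $\sigma^n=\id$ on $T$ because each member is $f^n$-fixed. Taking the geometric quotient $Y:=T/\langle\sigma\rangle$ yields a positive-dimensional base and a rational map $\bar\pi:X\dra Y$ satisfying $\bar\pi\circ f=\bar\pi$; the pullback of any nonconstant $\phi\in k(Y)$ supplies the desired $\psi$. If $f$ fails to stabilize the chosen component $T$, one replaces $T$ with the finite orbit closure of $T$ under the induced Chow action and symmetrizes $\phi\circ\bar\pi$ over the resulting cycle in $\{\psi,\psi\circ f,\ldots,\psi\circ f^{m-1}\}$ to obtain an $f$-invariant function.

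The hard part will be the Chow-scheme step: showing that an uncountable $\mathcal{P}_{d,m,n}$ must contain a positive-dimensional irreducible component. A priori $\mathcal{P}_{d,m,n}$ could be an uncountable zero-dimensional subset of the Chow scheme, yielding no algebraic family. This possibility is not vacuous, since Poincar\'e--Dulac normal forms at superattracting fixed points produce infinitely many invariant analytic germs that fail to belong to any algebraic family; separating algebraic periodic subvarieties from such transcendental pathologies is exactly where the conjecture resists direct attack in general. Additional dynamical input is needed, for instance $p$-adic methods in the spirit of Amerik and Campana, canonical-height bounds on periodic subvarieties, or structural features of particular classes of maps. The skew-linear case treated in this paper succeeds precisely because the linear fibration $X\times\A^N\to X$ allows candidate invariant subvarieties to be parameterized directly from the matrices $A(x)$, sidestepping the full Chow-scheme obstruction while still exhibiting the expected dichotomy between a Zariski-dense orbit and an invariant rational function.
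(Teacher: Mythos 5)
The statement you are proving is Conjecture~\ref{conexistszdo} itself, which the paper does not prove and explicitly treats as open: it is the Medvedev--Scanlon (and Amerik--Campana) conjecture, and the paper only establishes the special ``good pair'' propagation result of Theorem~\ref{thmzardenserat} for skew-linear maps. So there is no proof in the paper to compare against, and any complete argument you give would be a solution to an open problem; your proposal is not that.

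Concretely, there are two genuine gaps. First, the opening reduction ``to the case where $k$ is uncountable'' does not work. The conjecture is a statement about $k$-points, and the whole difficulty (as the paper states, citing \cite{Amerik2008} and \cite{dynamical-Rosenlicht} for the uncountable case) is when $k$ is countable, e.g.\ $k=\overline{\Q}$. Passing to an uncountable algebraically closed overfield $K$ and finding a $K$-point with dense orbit says nothing about $X(k)$; and in the contrapositive direction, assuming no $k$-point has dense orbit does not imply that no $K$-point does, so you cannot invoke the uncountable-field theorem to manufacture an invariant fibration over $K$, let alone descend it to $k$. Spreading out over a finitely generated ring does not repair this, because the dense-orbit property is not a constructible condition that specializes between countable and uncountable fields. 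Second, even granting uncountability, you yourself flag that the key step --- showing that the locus $\mathcal{P}_{d,m,n}$ of periodic subvarieties of bounded degree, dimension and period must contain a positive-dimensional component sweeping out $X$, and that the resulting incidence correspondence gives an $f$-invariant rational map to a positive-dimensional base --- is unproven; this is essentially where all known approaches (Amerik--Campana's $p$-adic argument for uncountable $k$, the dynamical Rosenlicht theorem, or the structure-specific arguments of \cite{Medvdevv1}, \cite{MS-ab-var}, \cite{Xie}) require serious additional input. What the paper actually does is orthogonal: it assumes the conjecture (in the strong ``good pair'' form) for the base $(X,g)$ and uses the classification of invariant subvarieties (Theorem~\ref{thmrateninvar}), torsor triviality, and Rosenlicht's Theorem~\ref{thmzardenalgg} on the fibre group $G_\alpha$ to lift the dichotomy to $X\times\A^N$; it does not attack the general conjecture by a Chow-scheme argument.
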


The condition from Conjecture~\ref{conexistszdo} that there is no nonconstant rational function $\psi\in k(X)$ such that $\psi\circ f = \psi$ is also refered as saying that $f$ does not fix a nonconstant fibration. It is immediate to see that such a condition is absolutely necessary in order to hope for the conclusion in Conjecture~\ref{conexistszdo} to hold; the difficulty in Conjecture~\ref{conexistszdo} is to prove that such a condition is indeed  sufficient for the existence of a Zariski dense orbit when the ground field is countable (note that the case when $k$ is uncountable was established first in \cite{Amerik2008}).  

In order to state our results, we introduce first the following definition.
\begin{defi}
Let $X$ be any projective variety over $k$ and $f:X\dashrightarrow X$ be a dominant rational self-map. We say that the pair $(X,f)$ is \emph{good} if Conjecture~\ref{conexistszdo} holds for every pair which is birationally equivalent to $(X,f)$, i.e., Conjecture~\ref{conexistszdo} holds for any dynamical system $(Y,g)$ for which there exists a birational map $\psi:X\dra Y$ such that $\psi\circ f=g\circ \psi$. 
\end{defi}

\begin{rem}
\label{rem:birationally equivalent}
It is immediate to see that if $(X,f)$ and $(Y,g)$ are birationally equivalent, then $f$ fixes a nonconstant fibration if and only if $g$ fixes a nonconstant fibration. Furthermore, if there is a point with a Zariski dense orbit under $f$ in each (nontrivial) open subset of $X$, then for any pair $(Y,g)$, which is birationally equivalent to $(X,f)$, there exists a point in $Y$ with a Zariski dense orbit under $g$. 
\end{rem}

Conjecture~\ref{conexistszdo} predicts that each dynamical pair $(X,f)$ is good; furthermore, in each of the important instances when Conjecture~\ref{conexistszdo} holds for $(X,f)$, then we actually know that the pair $(X,f)$ is good (for more details, see Section~\ref{subsection Zariski dense results}).   

We prove the following result for skew-linear self-maps. 

\begin{thm}\label{thmzardenserat}
Let $g:X\dashrightarrow X$ be a dominant rational map defined over $k$, let $N\in\N$, and let $f:X\times\A^N_k\dashrightarrow X\times\A^N_k$ be a dominant rational map defined by $(x,y)\mapsto (g(x),A(x)y)$ where  $A\in \GL_{N}(k(X))$. If the pair $(X,g)$ is good, then the pair $(X\times\A^N_k,f)$ is good.
\end{thm}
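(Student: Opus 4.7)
The plan is to reduce to a single-orbit existence statement and then exploit the linearity of $f$ in the fiber direction. By Remark~\ref{rem:birationally equivalent}, to establish goodness of $(X\times\A^N,f)$ it suffices to prove: assuming $f$ admits no nonconstant invariant rational function, there exists some $\alpha\in(X\times\A^N)(k)$ with Zariski dense $f$-orbit. Indeed, for every $n\ge 0$ the orbit $\OO_f(f^n(\alpha))$ differs from $\OO_f(\alpha)$ by finitely many points and is thus still Zariski dense, so dense-orbit points form a Zariski dense subset meeting every nonempty open subset, and Remark~\ref{rem:birationally equivalent} then promotes this to goodness.

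Next, the projection $\pi_X\colon X\times\A^N\to X$ intertwines $f$ and $g$, so any nonconstant $g$-invariant rational function on $X$ pulls back to a nonconstant $f$-invariant rational function on $X\times\A^N$. Hence $g$ admits no nonconstant invariant function either, and by goodness of $(X,g)$ together with the orbit-replication argument above, the set of $x_0\in X(k)$ whose $g$-orbit is Zariski dense in $X$ is itself Zariski dense in $X$. Fix such an $x_0$; the remaining task is to find $y_0\in\A^N(k)$ so that $(x_0,y_0)$ has Zariski dense $f$-orbit.

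For $y_0\in\A^N(k)$, write $V_{y_0}:=\overline{\OO_f(x_0,y_0)}$. Since $\{g^n(x_0)\}_n$ is dense in $X$, we have $\pi_X(V_{y_0})=X$, and a standard argument shows $V_{y_0}$ is irreducible and $f$-invariant. Suppose $V_{y_0}\subsetneq X\times\A^N$. Passing to the generic fiber of $\pi_X$, $V_{y_0}$ corresponds to a proper ideal $I\subset k(X)[y_1,\dots,y_N]$ stable under the ring endomorphism $f^*$, which is $g^*$-semilinear, preserves the $y$-grading, and sends $y_i\mapsto\sum_jA_{ij}(x)y_j$. Each homogeneous piece $I_d$ is then stable under the $d$-th symmetric power of $A$ twisted by $g^*$.

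The main obstacle is to show that any nontrivial such $I$ forces the existence of a nonconstant $f$-invariant rational function on $X\times\A^N$, contradicting the hypothesis. The mechanism is transparent in the case $N=1$: a proper irreducible $f$-invariant subvariety of $X\times\A^1$ surjecting onto $X$ is cut out by a monic polynomial $p(y)=y^d+\sum_{j<d}c_j(x)y^j$ satisfying $f^*(p)=a(x)^d\,p$, which forces $c_j\circ g=a^{d-j}c_j$; if some $c_j\ne 0$, then $y^{d-j}/c_j(x)$ is a nonconstant $f$-invariant function, while if all $c_j$ vanish the subvariety is $X\times\{0\}$, which is avoided by taking $y_0\ne 0$. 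For general $N$, the analogous step requires producing two semi-invariant polynomials $p_1,p_2\in k(X)[y_1,\dots,y_N]$ with a common character $\chi\in k(X)^\times$, i.e.\ $f^*(p_i)=\chi\cdot p_i$, so that $p_1/p_2$ is a nonconstant $f$-invariant element of $k(X)(y_1,\dots,y_N)$. I would aim to produce such a pair by a semi-linear descent applied to a minimal $f^*$-stable graded piece of $I$, using the invertibility of $A$ to iterate $f^*$ and control the characters. Once nontrivial $f^*$-stable ideals are ruled out, the locus of ``bad'' $y_0$ is contained in a proper subvariety of $\A^N$, so a generic $y_0$ yields the required Zariski dense $f$-orbit through $(x_0,y_0)$ and completes the proof.
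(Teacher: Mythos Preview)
Your proposal has a genuine gap at the decisive step for $N\ge 2$. You assert, and set as the ``main obstacle'', that any nontrivial $f^*$-stable ideal $I\subset k(X)[y_1,\dots,y_N]$ forces a nonconstant $f$-invariant rational function. This is false. Take $X=\A^1$, $g(x)=x+1$, and $A=\mathrm{diag}(2,3)\in\GL_2(k)$. The ideal $I=(y_1)$ is $f^*$-stable (since $f^*y_1=2y_1$), so $\A^1\times\{y_1=0\}$ is a nontrivial $f$-invariant subvariety dominating $X$. Yet there is no nonconstant $f$-invariant rational function: any such would be a ratio of two semi-invariants with a common character, but here the semi-invariants are the monomials $c\,y_1^ay_2^b$ with $c\in k^*$ and character $2^a3^b$ (using that $c(x+1)=\lambda c(x)$ for $c\in k(x)$ forces $\lambda=1$ and $c$ constant), and $2^a3^b$ determines $(a,b)$. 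Your proposed ``semi-linear descent on a minimal $f^*$-stable graded piece of $I$'' cannot manufacture a second semi-invariant with the same character, because none exists. Nevertheless $f$ \emph{does} have a Zariski dense orbit, e.g.\ through $(0,1,1)$. So the dichotomy you set up---either no nontrivial $f^*$-stable ideal, or an invariant function---is simply the wrong dichotomy; both alternatives can fail at once while the theorem still holds.

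The paper's proof avoids this by never trying to pass from a \emph{single} invariant subvariety to an invariant function. It considers all $f$-invariant subvarieties $V$ dominating $X$ simultaneously and packages them into an algebraic subgroup $G_\alpha=\bigcap_V G^V_\alpha\subset\GL_N$, the common stabilizer of the fibers $V_\alpha$ over a fixed base point. The structural Theorem~\ref{thmrateninvar} shows that, after a generically finite base change $Y\to X$ and a fiberwise linear change of coordinates, the map becomes $(x,y)\mapsto(g'(x),B(x)y)$ with $B(x)\in G_\alpha$ for every $x$, and every invariant subvariety becomes a product $Y\times V_0$ with $V_0$ $G_\alpha$-stable. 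Rosenlicht's Theorem~\ref{thmzardenalgg} applied to $G_\alpha$ acting on $\A^N$ then gives the correct dichotomy: either some $y$ has Zariski dense $G_\alpha$-orbit, in which case $(x_0,y)$ has dense $f$-orbit (its orbit closure is $Y\times V_0$ with $y\in V_0$ and $V_0$ $G_\alpha$-stable, hence $V_0=\A^N$), or there is a nonconstant $G_\alpha$-invariant $\phi\in k(\A^N)$, which one promotes to an $f$-invariant function by a norm along the cover $Y\to X$. In the counterexample above $G_\alpha$ is the full diagonal torus, which has dense orbits, and the argument recovers the right answer. Two minor side issues in your write-up: orbit closures need not be irreducible (take $A=-1\in\GL_1$ and look at the orbit of $(0,1)$), and $x_0$ must be chosen so that its $g$-orbit avoids the locus where $A$ degenerates; the paper handles both, but neither is the essential gap.
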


In Section~\ref{subsection Zariski dense results} we discuss various cases when Conjecture~\ref{conexistszdo} is known to hold; our Theorem~\ref{thmzardenserat} provides extensions of each one of those results since in the cases when Conjecture~\ref{conexistszdo} is known to hold for a dynamical pair $(X,f)$, then actually $(X,f)$ is a good pair.

Very importantly, we note that the study of the dynamics of pairs $(X\times \A_k^N,f)$ where $f(x,y)=(g(x),A(x)y)$ for some endomorphism $g:X\lra X$ and some $A\in \GL_N(k(X))$ is quite subtle. Even in the special case when  $X=\G_m^\ell$,  $g:\G_m^\ell\lra \G_m^\ell$ is an algebraic group endomorphism and $A\in \G_N(k)$ is a constant matrix, it is a delicate question to get a complete characterization for which $g$, $A$ and $x\in (\G_m^\ell\times \A^N)(k)$ we have that $\OO_f(x)$ is Zariski dense. This last question is completely solved in \cite{MS-linear.algebraic} using purely diophantine tools, thus very different techniques from the ones employed in our present paper.


\subsection{A brief history of previous results for the conjecture on the existence of Zariski dense orbits}
\label{subsection Zariski dense results}


We work with the notation as in Conjecture~\ref{conexistszdo}.

The special case of Conjecture~\ref{conexistszdo} when $k$ is an uncountable field was proved in \cite[Theorem~4.1]{Amerik2008} (which is stated more general, in the setting of K\"{a}hler manifolds); also, when $k$ is uncountable, but in the special case $f$ is an automorphism, Conjecture~\ref{conexistszdo} was independently proven in \cite[Theorem~1.2]{Dixmier}. Furthermore, if $k$ is uncountable, Conjecture~\ref{conexistszdo} holds even when $k$ has positive characteristic (see \cite[Corollary 6.1]{dynamical-Rosenlicht}). If $k$ is  countable, Conjecture~\ref{conexistszdo} has only been proved in 
a few special cases, using various techniques ranging from number theory, to $p$-adic dynamics, to higher dimensional algebraic geometry.

First, we note that Conjecture~\ref{conexistszdo} holds if $X$ has strictly positive Kodaira dimension and $f$ is birational, as proven in \cite[Theorem~1.2]{BGRS}. 

For varieties of negative Kodaira dimension, we note that Medvedev and Scanlon \cite[Theorem~7.16]{Medvdevv1} proved
Conjecture~\ref{conexistszdo}  for endomorphisms $f$ of $X=\mathbb{A}^m$ 
of the form $f(x_1,\dots, x_m)=(f_1(x_1),\dots, f_m(x_m))$, 
where $f_1,\dots, f_m\in k[x]$. Combining techniques from model theory, 
number theory and polynomial decomposition theory, they obtain a complete description of all invariant subvarieties, which is the key 
 to Conjecture~\ref{conexistszdo} since orbit closures are invariant.

In the case when $X$ is an abelian variety and $f \colon X \to X$ 
is a dominant self-map,  Conjecture~\ref{conexistszdo} was proved 
in~\cite{MS-ab-var}. The proof uses the explicit description
of endomorphisms of an abelian variety and relies on 
the Mordell-Lang conjecture, due to  Faltings~\cite{Faltings}. The strategy from \cite{MS-ab-var} was then extended in \cite{MS-semiabelian} to prove Conjecture~\ref{conexistszdo} for all regular self-maps of any semiabelian variety.

Using methods from valuation theory (among several other tools), the second author proved in  \cite[Theorem~1.1]{Xie} another important special case of  Conjecture~\ref{conexistszdo} for all polynomial endomorphisms $f$ of $\mathbb{A}^2$. Previously, the same author established in \cite{Xie-Duke} the validity of Conjecture~\ref{conexistszdo} for all birational automorphisms of surfaces (see also \cite{BGT-IMRN} for an independent proof in the case of automorphisms of surfaces).  

Finally, we observe that Conjecture~\ref{conexistszdo} may be viewed as a dynamical analogue of a theorem of Rosenlicht (see \cite{dynamical-Rosenlicht} for a comprehensive discussion on this theme). More precisely, the following result was proven by Rosenlicht \cite[Theorem~2]{rosenlicht}.

\begin{thm} 
\label{thmzardenalgg}  {\rm (\cite[Theorem~2]{rosenlicht})}
Consider the action of an algebraic group $G$ on 
an irreducible algebraic variety $X$ defined over an algebraically closed field $k$ of characteristic $0$. There exists a $G$-invariant dense open subvariety
$X_0 \subset X$ and a $G$-equivariant morphism
$g : X_0 \lra Z$ (where $G$ acts trivially on $Z$), 
with the following properties:
\begin{enumerate}
\item[(i)] for each $x \in X_0(k)$,
the orbit $G \cdot x$ equals the fiber $g^{-1}(g(x))$; and 
\item[(ii)] $g^* k(Z) = k(X)^G:=\left\{\psi\in k(X)\colon \psi\circ h=\psi\text{ for each }h\in G\right\}$.
\end{enumerate}
In particular, if there is no nonconstant fibration fixed by $G$, then for each $x\in X_0(k)$, we have $G\cdot x=X_0$ is Zariski dense in $X$. 
\end{thm}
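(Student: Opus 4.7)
The plan is to combine the assumed goodness of $(X,g)$ with a structural analysis of $f$-invariant subvarieties surjecting onto $X$, exploiting the linearity of the fibrewise action $y\mapsto A(x)y$.

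By Remark~\ref{rem:birationally equivalent}, proving goodness of $(X\times\A^N,f)$ reduces to showing: if $f$ admits no nonconstant invariant rational function, then Zariski dense $f$-orbits meet every nonempty Zariski open subset of $X\times\A^N$. As a first reduction, I would observe that any $g$-invariant $\varphi\in k(X)$ pulls back to the $f$-invariant $\varphi\circ\pi$ (with $\pi\colon X\times\A^N\to X$ the first projection); so $g$ fixes no nonconstant fibration, and by the assumed goodness of $(X,g)$ there is $x_0\in X(k)$ with $\overline{\OO_g(x_0)}=X$. Since tails of a dense orbit remain dense, the set of such $x_0$ is itself Zariski dense in $X$.

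The crux will be the structural claim that, assuming $f$ fixes no nonconstant rational function on $X\times\A^N$, no proper irreducible $f$-invariant subvariety $V\subsetneq X\times\A^N$ surjects onto $X$. Granting this, for $x_0$ with dense $g$-orbit and any $y_0\in\A^N$, the closure $\overline{\OO_f(x_0,y_0)}$ is an $f$-invariant subvariety whose first-projection image contains $\OO_g(x_0)$ and hence equals $X$; by the claim this closure must be all of $X\times\A^N$, and varying $(x_0,y_0)$ in the Zariski dense set of such starting points yields dense orbits meeting every nonempty open subset of $X\times\A^N$. To prove the structural claim I would argue by induction on $N$, analysing the generic fibre $V_\eta\subseteq\A^N_{k(X)}$. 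The linearity of the fibrewise action should force $V_\eta$ to be either an affine translate of a linear subspace --- from which, after quotienting by an $A$-equivariant sub-bundle, an $f$-equivariant projection to a lower-dimensional skew-linear system descends and induction applies --- or to admit a nonconstant invariant ``level function'', the prototype being the invariant $y_1y_2$ for the linear action $y\mapsto(\lambda y_1,\lambda^{-1}y_2)$. In either case one produces a nonconstant $f$-invariant rational function on $X\times\A^N$, contradicting the standing hypothesis.

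The hard part will be this structural classification: the base map $g$ twists the linear fibre action by the endomorphism $g^*$ of $k(X)$, so standard results on invariants of $\GL_N$-actions need reformulation in a $\sigma$-linear (difference-algebraic) setting where $\sigma=g^*$ need not be an automorphism. When $V_\eta$ is affine but not linear, the ``translation obstruction'' is a cohomology class for the cyclic action of $\langle g\rangle$ on an $A$-invariant sub-bundle, and I will have to show that either the class is trivializable --- in which case $V$ comes from a linear sub-bundle plus a rational section and induction proceeds --- or the class itself yields a nonconstant $f$-invariant function. Managing these obstructions while preserving the skew-linear inductive hypothesis, and invoking Rosenlicht-type quotient constructions (Theorem~\ref{thmzardenalgg}) for the Zariski closure of the monoid generated by the matrices $A(g^n(x))$, should be the technical heart of the proof.
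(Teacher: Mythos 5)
There is a fundamental mismatch here: the statement you were asked to prove is Theorem~\ref{thmzardenalgg}, Rosenlicht's theorem on the rational quotient of an algebraic group action --- the existence of a $G$-invariant dense open $X_0\subset X$ and a $G$-equivariant morphism $g:X_0\to Z$ such that (i) every orbit $G\cdot x$ with $x\in X_0(k)$ coincides with the fiber $g^{-1}(g(x))$ and (ii) $g^*k(Z)=k(X)^G$, together with the consequence that if $k(X)^G=k$ then every $x\in X_0(k)$ has dense orbit. Your proposal does not engage with any part of this. It is instead a strategy sketch for Theorem~\ref{thmzardenserat} (goodness of skew-linear systems $(X\times\A^N,f)$ given goodness of $(X,g)$), and it even invokes ``Rosenlicht-type quotient constructions (Theorem~\ref{thmzardenalgg})'' as an ingredient --- that is, you are assuming the very statement you were supposed to establish. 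Nothing in your text constructs the invariant open set $X_0$, the quotient variety $Z$, or the equivariant map $g$; nothing shows that generic orbits are closed in $X_0$ and equal the fibers; and nothing identifies the invariant function field with $g^*k(Z)$. So, as a proof of the stated theorem, the proposal is empty.

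For calibration: the paper itself does not prove this statement; it is quoted from Rosenlicht \cite[Theorem~2]{rosenlicht} and used as a black box (both in the discussion following it and in the proof of Theorem~\ref{thmzardenserat}). A proof along classical lines would take a finitely generated model $Z$ of the subfield $k(X)^G\subseteq k(X)$ (finitely generated since $k(X)$ is), let $g:X\dashrightarrow Z$ be the induced dominant rational map, and then use that in characteristic $0$ the $G$-invariant rational functions separate orbits in general position (orbits are constructible and, generically, closed in a suitable invariant open set), shrinking to a $G$-invariant open $X_0$ on which $g$ is regular and its fibers are exactly orbits; the ``in particular'' clause is then immediate because $k(X)^G=k$ forces $Z$ to be a point, so $G\cdot x=g^{-1}(g(x))=X_0$ for $x\in X_0(k)$. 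None of these steps, nor any substitute for them, appears in your proposal.
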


Theorem~\ref{thmzardenalgg} yields that Conjecture~\ref{conexistszdo} holds for each automorphism $f:X\lra X$ contained in an algebraic group $G$ (acting on $X$). Indeed (see also \cite{dynamical-Rosenlicht}) one can apply Theorem~\ref{thmzardenalgg} to $X$ and the algebraic group $G_0$ which is the Zariski closure of the cyclic group spanned by $f$ inside  $G$ and thus get that if $f$ does not fix a nonconstant fibration, then there is $x\in X(k)$ such that $G_0\cdot x$ is dense in $X$, and therefore $\OO_f(x)$ is Zariski dense in $X$ as well.


\subsection{Invariant subvarieties}


As a by-product of our method, we obtain the following characterization of invariant subvarieties under skew-linear automorphisms of $\A^1\times \A^N$ of the form $(x,y)\mapsto (x+1, A(x)y)$, where $A\in \GL_{N}(k[x])$.

\begin{thm}
\label{thmautoinvar}
Let $f:\A^1_k\times\A^N_k\to \A^1_k\times\A^N_k$ be an automorphism defined by $(x,y)\mapsto (x+1,A(x)y)$ where  $A(x)$ is a matrix in $\GL_{N}(k[x])$. Then there exists an automorphism $h$ on $\A_k^1\times \A^N_k$ of the form $(x,y)\mapsto (x,T(x)y)$ where $T(x)\in \GL_N(k[x])$ such that  for each  subvariety $V$ (not necessarily irreducible) of $\A^1_k\times\A^N_k$ invariant under $f$, we have 
$h^{-1}(V)=\A^1_k\times V_0\subseteq \A^1_k\times \A^N_k$ where $V_0$ is a subvariety of $\A^{N}_k$. 
\end{thm}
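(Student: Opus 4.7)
Plan. The idea is to find $T\in\GL_N(k[x])$ so that the conjugate map $h^{-1}\circ f\circ h$ with $h(x,y)=(x,T(x)y)$ takes the form $\tilde f(x,y)=(x+1,Cy)$ for some constant semisimple matrix $C\in\GL_N(k)$; the direct computation yields $\tilde f(x,y)=(x+1,B(x)y)$ with $B(x)=T(x+1)^{-1}A(x)T(x)$, so the task reduces to constructing $T$ realizing $B=C$ via polynomial $T$. Once this is achieved, one shows that every $\tilde f$-invariant subvariety of $\A^1_k\times\A^N_k$ is automatically of product form $\A^1_k\times V_0$, which via $h$ gives the theorem since $h^{-1}$ carries $f$-invariant subvarieties bijectively to $\tilde f$-invariant ones.

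The reduction proceeds by induction on $N$; the base $N=1$ is trivial because $\GL_1(k[x])=k^*$. For the inductive step I distinguish two cases according to whether $k[x]^N$ admits a proper nonzero $A$-invariant direct summand $M$---equivalent, since $k[x]$ is a PID, to the existence of any proper nonzero $A$-invariant submodule after saturating. In the splitting case a basis adapted to $M$ and a $k[x]$-complement puts $A$ in block upper triangular form $\bigl(\begin{smallmatrix}A_1 & B \\ 0 & A_2\end{smallmatrix}\bigr)$, and the induction hypothesis reduces $A_1,A_2$ to constants $C_1,C_2$. To kill the off-diagonal block I conjugate with $\bigl(\begin{smallmatrix}I & D(x) \\ 0 & I\end{smallmatrix}\bigr)$, which amounts to solving the Sylvester-type difference equation $D(x+1)C_2-C_1D(x)=B'(x)$ for a polynomial matrix $D(x)$; this is solvable by descending-degree coefficient matching when $C_1,C_2$ share no eigenvalues, and in the degenerate shared-eigenvalue case I first regroup equal-eigenvalue blocks by a constant conjugation and then eliminate any residual unipotent contribution $C_u$ by the polynomial-exponential trick $T(x)=C_u^{x}:=\exp(x\log C_u)$, which is genuinely polynomial in $x$ because $\log C_u$ is nilpotent in characteristic zero. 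In the non-splitting case where $A$ admits no proper nonzero $A$-invariant $k[x]$-submodule, techniques analogous to those developed for the proof of Theorem~\ref{thmzardenserat} show that the only $f$-invariant subvarieties of $\A^1_k\times\A^N_k$ are $\A^1_k\times\{0\}$ and $\A^1_k\times\A^N_k$, both already of product form, so $T=I$ suffices.

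To conclude, assume $B=C$ is a constant semisimple matrix; a further constant conjugation (absorbable into $T$) puts $C=\operatorname{diag}(\lambda_1,\ldots,\lambda_N)$. For an $\tilde f$-invariant irreducible $W$ surjecting onto $\A^1_k$, the generic-fibre ideal $I_\eta\subset k(x)[y]$ is preserved by the automorphism $\tau\colon p(x,y)\mapsto p(x+1,Cy)$. Decomposing each $p=\sum_I a_I(x)y^I\in I_\eta$ along the monomial-weight grading (where $C$ acts on $y^I$ by the scalar $\lambda^I\in k$) and applying a Vandermonde-style separation to the orbit $\{\tau^n p\}_{n\in\Z}\subset I_\eta$ forces the generators of $I_\eta$ to have coefficients satisfying $a_I(x+1)/a_I(x)\in k$ constant; polynomiality then forces $a_I\in k$ for every $I$, so $I_\eta$ descends to $k[y]$ and $W=\A^1_k\times V(I_\eta\cap k[y])$. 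Reducible invariants $V=\bigcup V_i$ decompose into finite $\tilde f$-orbits of irreducible components, each a product $\A^1_k\times V_{0,i}$, and a finite union of such products is again of the required form.

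The main anticipated obstacle is the Sylvester reduction step in the shared-eigenvalue case: staying within $\GL_N(k[x])$ rather than drifting into $\GL_N(k(x))$ requires carefully combining the block-regrouping by constant conjugation with the nilpotent-exponential trick. The characteristic-zero hypothesis plays an essential role here, since $\exp(x\log C_u)$ truncates to a matrix polynomial in $x$ precisely because $\log C_u$ is a finite nilpotent series in $C_u-I$.
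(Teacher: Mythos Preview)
Your central plan---conjugating $A$ via some $T\in\GL_N(k[x])$ to a \emph{constant} semisimple matrix $C$---cannot work in general, and the paper itself supplies the counterexample. Proposition~\ref{proexanoin} shows that for $A_1=\bigl(\begin{smallmatrix}1&1\\x&x+1\end{smallmatrix}\bigr)\in\GL_2(k[x])$ the map $f_{A_1}$ has no nontrivial invariant subvariety, and the remark following it observes that $A_1$ is \emph{not} equivalent to any constant matrix. In your dichotomy this $A_1$ is ``non-splitting'' (its characteristic polynomial $\lambda^2-(x+2)\lambda+1$ is irreducible over $k(x)$), so you would set $T=I$; fine so far. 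But now take $A=\operatorname{diag}(A_1,1)\in\GL_3(k[x])$. This $A$ admits the $A$-invariant direct summand $k[x]^2\times\{0\}$, so you land in the splitting branch and invoke the ``inductive hypothesis'' to reduce the block $A_1$ to a constant---which is impossible. Nor can you retreat to the other branch: $f_A$ has many nontrivial invariant subvarieties (e.g.\ $\A^1\times\A^2\times\{0\}$ and $\A^1\times\{0\}^2\times\A^1$), so $T=I$ fails too. Indeed, if some $T\in\GL_3(k[x])$ conjugated $A$ to a constant, restricting $T$ to the induced map between the invariant $2$-planes would conjugate $A_1$ to a constant in $\GL_2(k[x])$, a contradiction. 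Thus the dichotomy ``equivalent to a constant $C$'' versus ``no nontrivial invariants'' is already false at $N=3$, and the statement you are actually using in the inductive step is strictly stronger than the theorem and unavailable. (Separately, the non-splitting branch is justified only by a gesture toward ``techniques analogous to Theorem~\ref{thmzardenserat}'', which is not a proof.)

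The paper's argument sidesteps this obstruction by never trying to make $A$ constant. Instead it shows (Lemma~\ref{lemisofiber}) that for every invariant $V$ each fiber $V_x$ is a $\GL_N(k)$-translate of $V_0$, then forms the simultaneous transporter $\sG=\{(x,g):g(V_0)=V_x\text{ for every }V\in I\}$, which is a torsor over $\A^1_k$ for the algebraic group $G_0=\bigcap_V\operatorname{Stab}(V_0)$. Lemma~\ref{lemtorsortrivialafl} proves any such torsor over $\A^1_k$ is trivial (via \'etale cohomology, simple connectedness of $\A^1$, and the theorem of Raghunathan--Ramanathan on principal bundles), yielding a polynomial section $T$. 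The resulting conjugate $B(x)=T(x+1)^{-1}A(x)T(x)$ lies in $G_0(k[x])$ but is typically \emph{not} constant; this weaker normal form is precisely the paper's notion of a \emph{straight} model (Definition~\ref{definition straight model}, Proposition~\ref{procristrga}).
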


We also prove in Theorem~\ref{thmrateninvar} a more general version of the above result for invariant subvarieties under the action of a skew-linear self-map $f:X\times \A^N\dra X\times \A^N$. 

\begin{rem}
\label{remark straight model}
With the notation as in Theorem~\ref{thmautoinvar}, we have that $h^{-1}(V)$ is invariant under $h^{-1}\circ f\circ h$;  in other words, $h^{-1}\circ f\circ h=(x+1, B(x)y)$ and $B(x)(V_0)=V_0$ for all $x\in \A^1_k.$
\end{rem}

A skew-linear automorphism $\tilde{f}:\A^1\times \A^N\lra\A^1\times\A^N$ such as the automorphism $h^{-1}\circ f\circ h$ from Remark~\ref{remark straight model} will be called \emph{straight}; more precisely, an automorphism of $\A^1\times \A^N$ of the form $(x,y)\mapsto (x+1,A(x)y)$ is straight if each invariant subvariety under its action is of the form $\A^1\times V_0$ for some subvariety $V_0\subseteq \A^N$ (see also Definition~\ref{definition straight model}). Theorem~\ref{thmautoinvar} yields that any automorphism $f$ of $\A^1\times \A^N$ of the form $(x,y)\mapsto (x+1, A(x)y)$ is conjugate to a straight automorphism (see Remark~\ref{remark straight model}). In Section~\ref{section straight model} we study more in-depth the straight automorphisms of $\A^1\times \A^N$, which leads us to proving the following result.

\begin{thm}
\label{corperiod 0}
Let $N\in\N$, let $A\in \GL_N(k[x])$, let $f:\A^1\times \A^N\lra \A^1\times \A^N$ be the automorphism given by $(x,y)\mapsto (x+1,A(x)y)$, and let $V$ be a periodic subvariety of $\A^1\times \A^N$ under the action of $f$. Then the period of $V$ is uniformly bounded by a constant depending only on $A$ (and independent of $V$). 
\end{thm}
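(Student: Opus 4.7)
My plan is to apply Theorem~\ref{thmautoinvar} to reduce $f$ to a straight skew-linear automorphism $\tilde{f}$, then show that every irreducible periodic subvariety under $\tilde{f}$ is of the product form $\A^1\times V_0$, and finally bound the period by an eigenvalue invariant of $B(0)$. A reducible periodic subvariety is a disjoint union of $\tilde{f}$-orbits of irreducible subvarieties whose period is the $\lcm$ of the periods of those orbits, so it suffices to handle the irreducible case. Concretely, by Theorem~\ref{thmautoinvar} there is $h(x,y)=(x,T(x)y)$ with $T\in\GL_N(k[x])$ depending only on $A$ such that $\tilde{f}:=h^{-1}\circ f\circ h$ is straight; write $\tilde{f}(x,y)=(x+1,B(x)y)$ for the resulting $B\in\GL_N(k[x])$. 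Since conjugation by $h$ preserves periods, I may replace $f$ by $\tilde{f}$.

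Let $V$ be irreducible and $\tilde{f}$-periodic of period $n$. The orbit union $W:=\bigcup_{i=0}^{n-1}\tilde{f}^i(V)$ is $\tilde{f}$-invariant, so by straightness $W=\A^1\times W_0$ for some $W_0\subseteq\A^N$ with $B(x)W_0=W_0$ for every $x\in\A^1$. Decompose $W_0=W_0^{(1)}\cup\cdots\cup W_0^{(s)}$ into irreducible components. Each locus $\{x\in\A^1:B(x)W_0^{(i)}=W_0^{(j)}\}$ is Zariski closed in $\A^1$, and since $\A^1$ is irreducible the permutation of $\{W_0^{(j)}\}$ induced by $B(x)$ is a single constant $\sigma\in S_s$. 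The projection $V\to\A^1$ is dominant --- its image has Zariski closure invariant under $x\mapsto x+n$, hence equal to $\A^1$ --- so the irreducible generic fiber $V_\eta$ lies in a single $W_0^{(j_0)}\otimes k(x)$, giving $\tilde{f}^i(V)\subseteq\A^1\times W_0^{(\sigma^i(j_0))}$ for each $i$. Conversely, each $\A^1\times W_0^{(j)}$ is irreducible and contained in $W=\bigcup_i\tilde{f}^i(V)$, so must be contained in some $\tilde{f}^i(V)$; combining both inclusions forces $\tilde{f}^i(V)=\A^1\times W_0^{(\sigma^i(j_0))}$. Therefore $V=\A^1\times V_0$ with $V_0:=W_0^{(j_0)}$, and the period $n$ equals the orbit size of $V_0$ under the single linear map $B_0:=B(0)\in\GL_N(k)$.

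To bound this orbit I use the fact (verifiable by a weight-decomposition argument on the polynomial ring $k[y_1,\ldots,y_N]$ using the Jordan form of $B_0^*$) that for any $B_0\in\GL_N(k)$, every $B_0$-periodic irreducible subvariety of $\A^N$ has period dividing $M(B_0)$, where $M(B_0)$ denotes the exponent of the torsion subgroup of the multiplicative subgroup $\langle\lambda_1,\ldots,\lambda_N\rangle\leq k^*$ generated by the eigenvalues of $B_0$. Since $M(B_0)$ is finite and depends only on $A$ (via $B$, via $B_0$), this yields $n\leq M(B_0)$, establishing the desired uniform bound $C(A)$. The main obstacle is the product-form conclusion of the previous paragraph: it requires combining straightness of $\tilde{f}$ (which ensures the orbit union is a product) with the irreducibility/maximality argument on components of $W_0$ to force $V$ itself to be a full product $\A^1\times V_0$ and not merely contained in one. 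Once this product structure is in hand, the periodicity question reduces to the well-understood question of orbits of a single linear map acting on subvarieties of $\A^N$.
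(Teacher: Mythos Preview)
Your argument is correct and shares its opening move with the paper: pass to a straight model via Theorem~\ref{thmautoinvar}. After that the two proofs diverge. The paper's Corollary~\ref{corperiod} asserts $f^i(V)=\A^1_k\times W_i$ in the straight model and then observes that the group $\G_A=\bigcap_{W\in r_0(I_A)}G_W$ (built in Section~\ref{section straight model}) acts transitively on $\{W_0,\dots,W_{m-1}\}$; since the connected component $\G_A^0$ must fix each $W_i$, orbit--stabilizer gives $m\mid|\pi_0(\G_A)|$. You instead supply a direct component-permutation argument showing that each irreducible periodic subvariety is already a product $\A^1\times V_0$, and then bound the period of $V_0$ under the single matrix $B_0=B(0)$ by the eigenvalue-torsion exponent $M(B_0)$. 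Both routes are valid---indeed $M(B_0)=|\pi_0(\overline{\langle B_0\rangle})|$, and your final step is really the same ``connected group acts trivially on a finite orbit'' mechanism the paper uses, applied to $\overline{\langle B_0\rangle}\subseteq\G_A$ rather than to $\G_A$ itself. The paper's version is shorter because it leverages the $\G_A$-machinery already in place; yours is more self-contained and yields a concrete numerical bound, at the price of the auxiliary linear-algebra fact about $M(B_0)$ that you only sketch. One small slip: the period of a reducible periodic $V$ \emph{divides} (rather than equals) the $\lcm$ of the periods of its irreducible components---but since each of those divides $M(B_0)$, so does their $\lcm$, and your reduction to the irreducible case still goes through.
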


Actually, in Corollary~\ref{corperiod} we prove a more precise version of Theorem~\ref{corperiod 0} by showing that the period of any periodic subvariety $V$ divides some positive integer intrinsically associated to $A$. 
We believe that Theorem~\ref{corperiod 0} (and more generally, the results from Section~\ref{section automorphisms of affine space}) would be helpful in a further study of finding which points $x\in \A^1_k\times \A^N_k$ have a Zariski dense orbit under an automorphism $f$ of the form $(x,y)\mapsto (x+1,A(x)y)$. 

Besides the intrinsic interest in the results of Section~\ref{section automorphisms of affine space}, they also provide a simpler proof of a special case of Theorem~\ref{thmrateninvar}, thus helping the reader to understand the more general approach from Section~\ref{section rational maps}.


\subsection{The plan for our paper}


In Section~\ref{section rational maps} we study the invariant subvarieties for skew-linear self-maps $f$ of $X\times \A^N$ (for an arbitrary algebraic variety $X$) and subsequentely prove Theorems~\ref{thmrateninvar}~and~\ref{thmzardenserat}. In Section~\ref{section automorphisms of affine space} we prove Theorem~\ref{thmautoinvar} (which is a more precise version of Theorem~\ref{thmrateninvar} when $X=\A^1$ and $f$ is an automorphism) and then Theorem~\ref{corperiod 0} (see Corollary~\ref{corperiod}). We conclude our paper with a more in-depth study of straight forms corresponding to skew-linear automorphisms of $\A^1\times\A^2$; see Section~\ref{subsection N=2}.

\smallskip

{\bf Acknowledgments.} We are grateful to our colleagues Philippe Gille, Matthieu Romagny and Zinovy Reichstein for helpful conversations while preparing our paper. We also thank the anonymous referee for his/her useful comments and suggestions.


\section{Zariski dense orbits}
\label{section rational maps}


In this section, we let $X$ be a variety defined over an algebraically closed field $k$ of characteristic $0$, endowed with a dominant self-map $g:X\dashrightarrow X$. We let $N\in\N$ and let $\pi: X\times \A^N_k\to X$ be the projection onto the first coordinate. We also let $A\in \GL_N(k(X))$ and (as in Theorem~\ref{thmzardenserat}), we let $f:X\times \A^N\dra X\times \A^N$ be the rational endomorphism given by $(x,y)\mapsto (g(x),A(x)y)$.


\subsection{Characterization of invariant subvarieties}


An important ingredient in our proof of Theorem~\ref{thmzardenserat} is a complete description of the subvarieties $Y$ of $X\times \A^N$, which dominate $X$ under the projection map $\pi$, and moreover, $Y$ is invariant under the action of the skew-linear self-map $f$. So, we start by stating Theorem~\ref{thmrateninvar} which characterizes the (not necessarily irreducible) subvarieties of $X\times \A^N$, which are invariant under the rational self-map $f$; we state our result  under the assumption that $g$ fixes no nonconstant rational fibration, i.e., there is no nonconstant $\phi\in k(X)$ such that $\phi\circ g=\phi$. In Section~\ref{subsection preserves fibration} we explain that the general case can be reduced to Theorem~\ref{thmrateninvar}. 

\begin{thm}\label{thmrateninvar}Let $f:X\times\A^N_k\dashrightarrow X\times\A^N_k$ be a dominant rational map defined by $(x,y)\mapsto (g(x),A(x)y)$ where  $A(x)$ is a matrix in $\GL_{N}(k(X))$, and let $\pi:X\times \A^N\lra X$ be the projection map. Suppose that there is no nonconstant rational function $\phi\in k(X)$ such that $\phi\circ g=\phi.$ Then there exists:
\begin{itemize}
\item an integer $\ell\geq 1$;
\item an irreducible variety $Y$ endowed with a dominant rational map $g':Y\dra Y$ along with a generically finite map $\tau: Y\dashrightarrow X$ satisfying $\tau\circ g'=g^\ell\circ \tau$; 
\item a birational map $h$ on $Y\times \A^N_k=Y\times_{X} X\times\A^N_k$ of the form $(x,y)\mapsto (x,T(x)y)$ where $T(x)\in \GL_N(k(Y))$, 
\end{itemize}
such that  for any (not necessarily irreducible) subvariety $V\subset X\times\A^N_k$ with the properties that:
\begin{itemize}
\item $V$ is invariant under $f$, and 
\item each irreducible component of $V$  dominates $X$ under the induced projection map $\pi|_{V}:V\lra X$, 
\end{itemize}
we have $h^{-1}\left((\tau\times_X\id)^{\#} (V)\right)=Y\times V_0\subseteq Y\times \A^N_k$, where $V_0$ is a subvariety of $\A^{N}_k$ and $(\tau\times_X\id)^{\#}(V)$ is the corresponding strict transform\footnote{Let $\phi:X_1\dashrightarrow X_2$ be any generically finite rational map between projective varieties.  
Let $W$ any subvariety of $X_2$, we define the strictly transform $\phi^{\#}(W)$  of $W$ to be the union of all irreducible components with the multiplicities of the Zariski closure of $\phi^{-1}|_{X_1\setminus I(\phi)}(W)$ on which $\phi$ are generically finite.}. 
\end{thm}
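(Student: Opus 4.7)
My plan is to translate the statement into a question about difference equations and appeal to Picard-Vessiot theory. Set $K := k(X)$ and let $\sigma: K \to K$ be the field endomorphism $\phi \mapsto \phi \circ g$; the hypothesis of the theorem is exactly that $K^\sigma = k$. Since every irreducible component of $V$ dominates $X$, the subvariety $V$ is determined by its generic fiber $V_\eta \subseteq \A^N_K$, and the $f$-invariance $\overline{f(V)} = V$ is equivalent to
\[
\sigma^{*}(V_\eta) = A \cdot V_\eta
\]
as subvarieties of $\A^N_K$, where $\sigma^{*}$ is the twist acting on $K$-points of the relevant Hilbert scheme of $\A^N$ and $A \in \GL_N(K)$ acts by matrix multiplication. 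So the problem reduces to a structural question about the solutions of this equation.

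\textbf{Picard-Vessiot extension and the cover $Y$.}
I would consider the first-order $\sigma$-difference system $\sigma(Z) = A\,Z$ (unknown $N \times N$ matrix $Z$) over the difference field $(K,\sigma)$. Because $K^\sigma = k$ is algebraically closed of characteristic zero, Picard-Vessiot theory produces a difference ring extension $L/K$ with $L^\sigma = k$, a fundamental solution $U \in \GL_N(L)$ satisfying $\sigma(U) = A\,U$, and a linear algebraic $k$-group $G \subseteq \GL_{N,k}$ acting on $U$ by right multiplication. The substitution $y = U z$ conjugates the $f$-dynamics on $\A^N_L$ to the trivial action $(x,z) \mapsto (g(x), z)$. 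Consequently, $f$-invariant subvarieties $V_L \subseteq \A^N_L$ dominating $\Spec L$ correspond bijectively, via $U$, with $k$-subvarieties $V_0 \subseteq \A^N$; the ones that descend further to $V_\eta \subseteq \A^N_K$ are exactly those for which $V_0$ is $G$-stable. To get the claimed algebraic cover, pass to $K' := L^{G^\circ}$, the fixed field of the identity component $G^\circ$: this is a finite Galois subextension of $L/K$, preserved by $\sigma$, with Galois group $G/G^\circ$. Take $Y$ to be the normalization of $X$ in $K'$ with natural map $\tau: Y \dashrightarrow X$, and let $\ell \geq 1$ be the smallest integer so that $g^\ell$ lifts to a rational self-map $g': Y \dashrightarrow Y$ compatible with $\tau$, so that $\tau \circ g' = g^\ell \circ \tau$.

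\textbf{Construction of $T$ and main obstacle.}
To produce $T \in \GL_N(k(Y))$, observe that although $U$ itself lies only in $\GL_N(L)$, its right coset $U \cdot G^\circ$ is $\Gal(L/K')$-invariant (since $G^\circ$ is normal in $G$) and therefore defines a $K'$-rational point of the homogeneous space $\GL_N / G^\circ$. A rational section of the $G^\circ$-torsor $\GL_N \to \GL_N/G^\circ$ over this point supplies the required $T$. Setting $h(x,y) := (x, T(x) y)$, one then verifies that $h^{-1}\bigl((\tau \times_X \id)^{\#}(V)\bigr)$ has generic fiber $T^{-1}\tau^{*}(V_\eta)$ which, modulo the residual right-$G^\circ$-indeterminacy, agrees with the $G$-stable $V_0$; the $G^\circ$-stability of $V_0$ then promotes this to a genuine product $Y \times V_0$. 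The principal obstacle is the universality of $T$: a single choice must work simultaneously for every $f$-invariant $V$ whose components dominate $X$. This is handled by noting that any such $V$ produces a $G$-stable $V_0$, which is \emph{a fortiori} $G^\circ$-stable, so the residual right-$G^\circ$-ambiguity in $T$ cannot disrupt the product form; in particular, no information specific to any particular $V$ enters the construction of $T$, only the pair $(A,\sigma)$ and its associated difference Galois data.
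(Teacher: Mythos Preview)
Your approach via difference Picard--Vessiot theory is genuinely different from the paper's. The paper never appeals to Galois theory; it works with the moduli of effective cycles on $\A^N$ directly, uses Amerik--Campana over an uncountable extension of $k$ to show that the fibers $V_x$ are all $\GL_N(k)$-equivalent over a dense open, then builds a single invariant cycle $S$ whose transporter sets $G^S_{x_1,x_2}$ are contained in $G^V_{x_1,x_2}$ for every invariant $V$, and takes $Y$ to be an irreducible component of the finite quotient $\sG/G_\alpha^0$ of the resulting transporter torsor $\sG\subset U_S\times\GL_N$. The role of your difference Galois group $G$ is played by the geometric stabilizer $G_\alpha$ of $S_\alpha$, and the role of your fundamental matrix $U$ by a rational section $T$ of the connected torsor $\sG_Y^0$.

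There is, however, a genuine gap. Picard--Vessiot theory for $\sigma$-difference equations, in the form you invoke, requires $\sigma$ to be an \emph{automorphism} of $K$; here $\sigma=g^{*}$ is only an injective endomorphism of $k(X)$ whenever $g$ is not birational, and the theorem is stated for arbitrary dominant $g$. Passing to the inversive closure $\widetilde K=\varinjlim_\sigma K$ restores bijectivity, but $\widetilde K$ is not finitely generated over $k$, so ``normalization of $X$ in $K'$'' no longer yields a variety with a generically finite map to $X$, and your construction of $Y$ collapses. Two smaller issues: even when $\sigma$ is an automorphism, the PV ring is typically only a finite product of domains permuted cyclically by $\sigma$, so ``the fixed field $L^{G^\circ}$'' and the extraction of $\ell$ require more care than you give them; and the assertion that a rational section of $\GL_N\to\GL_N/G^\circ$ exists over your $K'$-point is precisely the $H^1(K',G^\circ)$-obstruction question that the paper isolates and handles by citing Colliot-Th\'el\`ene--Ojanguren, not something you can take for granted for general connected $G^\circ$.
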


Theorem~\ref{thmrateninvar} is a generalization of Theorem~\ref{thmautoinvar} (though the latter result is slightly more precise, i.e., $\ell=1$ if $X=\A^1$ and $g(x)=x+1$).  
We prove Theorem~\ref{thmrateninvar} in Section~\ref{subsection proof theorem invariant subvarieties}.


\subsection{Invariant cycles}
Denote by  $t_g:=[k(X):g^*(k(X))]\geq 1$ the topological degree of $g$.

For any irreducible subvariety $W$ of $X\times \A^N_k$ which dominates $X$, denote by $f_{\#}W:=d_Wf(W)$ where  $d_W$ is the topological degree of $f|_{W}$ (and, as always, $f(W)$ is the Zariski closure of $f(W\setminus I(f))$). In our case, since $W$ dominates $X$ and the action of $f$ on the fiber is linear, we have $d_W=t_g$.

Let $V$ be an effective cycle of $X\times \A^N_k$ such that every irreducible component of $V$ dominates $X$.
Write $V=\sum_{i=1}^\ell a_iV_i$ where $V_i$ are irreducible components of $V$ and $a_i\geq 1$.
Write $f_\#V:=\sum_{i=1}^\ell a_if_\#V_i=t_g\sum_{i=1}^\ell a_if(V_i).$
We say that $V$ is invariant under $f$ if the support of $V$ and $f_\#(V)$ are the same i.e. $f_{\#} V=t_g V$.

For any subvariety $V$ of $X\times \A^N_k$ such that every irreducible component of $V$ dominates $X$, we may view it as an effective cycle such that every irreducible component of $V$ dominates $X$ and all nonzero coefficients are equal to one. Then it is invariant under $f$ if and only if as an effective cycle, it is invariant under $f$.

\subsection{Characterization of invariant subvarieties, general case}
\label{subsection preserves fibration}


In this section we explain that the case in which $g$ fixes a nonconstant fibration can be reduced to Theorem~\ref{thmrateninvar}. Indeed, first of all, we may suppose that $X$ is projective (since $g$ is a rational self-map).  Then let 
$$L=k(X)^g=\left\{\phi\in k(X)\colon \text{ }\phi\circ g=\phi\right\};$$
clearly, $L$ is a subfield of $k(X)$ containing $k$. Let $r$ be the transcendence degree of $L$ over $k$; so, $1\le r\leq \dim X$ since we assume that $g$ fixes a nonconstant fibration. 

Let $R$ be a finitely generated $k$-subalgebra of $L$ whose fraction field is $L$.  Let $B$ be an irreducible projective variety containing $\Spec R$ as a dense subset. 
The inclusion $R\hookrightarrow k(X)$ yields a dominant rational map $\psi: X\dashrightarrow B$.  
At the expense of replacing $X$ by some suitable birational model, we may assume  that $X$ is smooth and that the map $\psi$ is regular.  By Stein factorization, we may further assume that the generic fiber of $\psi$ is connected. By generic smoothness, we obtain that the generic fiber of $\psi$ is smooth and thus geometrically irreducible.

Let $\eta$ be the generic point of $B$.  Let $K$ be an algebraic closure of $L$. The geometric generic fiber of $\psi$ is denoted by $X_{\eta}$ over $K.$ Then $g$ induces a dominant rational self-map $g_{\eta}$ on $X_{\eta}$ and $f$ induces a dominant rational self-map on $X_{\eta}\times \A^N$.  Denote by $I$ the set of invariant subvarieties of $X\times \A^N$ such that each of their irreducible components dominate $X$ under the projection map $X\times\A^N\lra X$; we also let  $I_{\eta}$ be the set of invariant subvarieties of $X_{\eta}\times \A^N$ such that each of their irreducible components dominate $X_{\eta}$.
For every invariant subvariety $V\in I$, we have that $V_{\eta}:=V\times_XX_{\eta}$ is contained in $I_{\eta}$; the map $V\mapsto V_{\eta}$ is bijective.

By the construction of $B$, there is no nonconstant rational function $\phi\in K(X_{\eta})$ satisfying $\phi\circ g=\phi$; therefore Theorem \ref{thmrateninvar} applies for $X_{\eta}$.  
So, there exists an integer $\ell\geq 1$ and an irreducible variety $Y_{\eta}$ endowed with a dominant rational self-map $g'_{\eta}:Y_{\eta}\to Y_{\eta}$ along with a generically finite map $\tau_{\eta}: Y_{\eta}\dashrightarrow X_{\eta}$ satisfying $\tau_{\eta}\circ g'_{\eta}=g_{\eta}^\ell\circ \tau_{\eta}$ such that there exists 
a birational map $h_{\eta}$ on $Y_{\eta}\times \A^N=Y_{\eta}\times_{X_{\eta}} X_{\eta}\times\A^N$ of the form $(x,y)\mapsto (x,T(x)y)$ where $T(x)\in \GL_N(K(Y_{\eta}))$ with the property that for any subvariety $V_{\eta}\in I_{\eta}$, we have 
$$h_{\eta}^{-1}((\tau_{\eta}\times_{X_{\eta}}\id)^{\#} (V_{\eta}))=Y_{\eta}\times V_0'\subseteq Y_{\eta}\times \A^N,$$ 
where $V_0'$ is a subvariety of $\A^{N}_K$ and $(\tau_{\eta}\times_{X_{\eta}}\id)^{\#}(V_{\eta})$ is the strict transform.  We note that $X_{\eta}$ is in fact defined over $L$; furthermore, there exists a finite extension $J$ over $L$ such that $Y_{\eta},\tau_{\eta}, h_{\eta}$ and $V_0'$ are defined over $J$.


\subsection{Proof of Theorem~\ref{thmrateninvar}}
\label{subsection proof theorem invariant subvarieties}


We work with the notation as in Theorem~\ref{thmrateninvar}.

\smallskip

Let $\sB$ be the set of points $x\in X$ such that $f$ is not a locally isomorphism on the fiber $\pi^{-1}(x)$. Then $\sB$ is a proper closed subset of $X$.

Let $I$ be the set of all effective invariant cycles $V$ in $X\times \A^N_k$ for which every irreducible component of $V$ dominates $X$ under the projection map $\pi:X\times \A^N\lra X$. 
For any $x\in X$ and for  any $V\in I$, we let 
$$V_x:=\pi^{-1}(x)\cap V\subseteq \A^{N}_k.$$
In the next result we show that over a Zariski dense subset of $X$, we have that each $V_x$ is obtained through some linear transformation from a given $V_{x_0}$. 
\begin{pro}\label{lemisofiberopen}
Let $V\in I$. 
There exists a Zariski open set  $U_V$ of $X$ such that  such that for any points $x_1, x_2\in U_V(k)$, there exists $g\in\GL_N(k)$ such that $V_{x_2}=g(V_{x_1}).$
\end{pro}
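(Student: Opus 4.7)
The plan is to encode the family $\{V_x\}_{x\in X}$ as a rational map from $X$ into a parameter space $H$ of effective cycles on $\A^N$, exploit the $f$-invariance of $V$ to translate it into a $\GL_N(k)$-equivariant map, and then apply Rosenlicht's theorem (Theorem \ref{thmzardenalgg}) together with the standing hypothesis that $g$ preserves no nonconstant fibration.

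First I would let $H$ be the Chow variety (or a finite product of Chow varieties, indexed by the distinct dimensions/degrees of the irreducible components of $V$) parametrizing effective cycles on $\A^N$ of the multidegree of the generic fiber of $V\to X$. Since every irreducible component of $V$ dominates $X$ under $\pi$, the fiber $V_x=\pi^{-1}(x)\cap V$ is a well-defined effective cycle on $\A^N$ for $x$ in some Zariski open $U_0\subseteq X$, and the classifying assignment $\Phi:x\mapsto [V_x]$ is a morphism $U_0\to H$. The group $\GL_N(k)$ acts naturally on $H$ through its standard action on $\A^N$, and this action preserves each irreducible component of $H$ because $\GL_N$ is connected.

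Next I would use the invariance $f_{\#}V=t_g\cdot V$, combined with the fact that every component of $V$ dominates $X$, to extract fiberwise the identity $V_{g(x)}=A(x)\cdot V_x$ for $x$ in a Zariski dense open of $X$. On $H$ this reads $\Phi(g(x))=A(x)\cdot\Phi(x)$. I would then apply Rosenlicht's theorem to the $\GL_N(k)$-action on the (irreducible, $\GL_N$-stable) component of $H$ containing $\overline{\Phi(U_0)}$, obtaining a $\GL_N$-invariant open $H_0\subseteq H$ together with a $\GL_N$-equivariant morphism $\rho:H_0\to Q$ whose fibers are precisely the $\GL_N(k)$-orbits on $H_0$ (and with trivial $\GL_N$-action on $Q$). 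Setting $U_V:=\Phi^{-1}(H_0)\cap U_0$ and $\psi:=\rho\circ\Phi:U_V\dashrightarrow Q$, the equivariance above yields $\psi\circ g=\psi$ on a dense open subset of $X$; if $\psi$ were nonconstant, pulling back any nonconstant rational function on $\overline{\psi(U_V)}$ along $\psi$ would produce a nonconstant $\phi\in k(X)$ with $\phi\circ g=\phi$, contradicting the hypothesis. Hence $\psi$ is constant, i.e.\ $\Phi(x_1)$ and $\Phi(x_2)$ lie in a common $\GL_N(k)$-orbit for all $x_1,x_2\in U_V(k)$, which is exactly the conclusion $V_{x_2}=\gamma(V_{x_1})$ for some $\gamma\in\GL_N(k)$.

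The main obstacle I anticipate is not conceptual but foundational: one has to set up the parameter space $H$ correctly for cycles with components of possibly different dimensions and multiplicities, check that $\Phi$ is a bona fide morphism over a sufficiently large open $U_0\subseteq X$, and verify that its image lies inside a single irreducible $\GL_N$-stable component of $H$, so that Rosenlicht's theorem applies to a geometrically irreducible target. Once this bookkeeping is in place, the equivariance identity $\Phi(g(x))=A(x)\cdot\Phi(x)$ and the no-fibration hypothesis combine into a three-line argument.
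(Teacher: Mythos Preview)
Your argument is correct and takes a genuinely different route from the paper. The paper does not invoke Rosenlicht on the cycle space; instead it base-changes to an uncountable algebraically closed extension $K\supseteq k$, uses the Amerik--Campana result that over uncountable fields the no-fibration hypothesis guarantees a $K$-point $\alpha$ with Zariski dense $g$-orbit avoiding the bad locus $\sB$, and observes directly that $V_{g^n(\alpha)}=A(g^{n-1}(\alpha))\cdots A(\alpha)\cdot V_\alpha$ lies in the $\GL_N$-orbit of $V_\alpha$ for every $n$. Density of the orbit then forces the constructible set $\{x: V_x\in\GL_N\cdot V_\alpha\}$ to contain a dense open of $X_K$, and a short descent (pick $\beta$ in this open with $\beta\in X(k)$, replace $V_\alpha$ by $V_\beta$) brings the conclusion back to $k$.

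Your approach trades the base change and descent for a single application of Rosenlicht's theorem plus the no-fibration hypothesis in its raw form; it is cleaner and stays entirely over $k$. The paper's route, on the other hand, is more concrete about \emph{which} $\GL_N$-orbit the fibers fall into, and it rehearses the uncountable-base-change trick that the paper uses again immediately afterwards in the proof of Lemma~\ref{lemsdeterv}. One technical point you should tighten: apply Rosenlicht to $Z:=\overline{\GL_N\cdot\Phi(U_0)}$ rather than to the ambient component of $H$. Otherwise the Rosenlicht open $H_0$ could in principle miss $\Phi(U_0)$ entirely; working on $Z$ guarantees $\Phi(U_0)\cap Z_0\neq\emptyset$ because $Z_0$ is $\GL_N$-invariant and $\GL_N\cdot\Phi(U_0)$ is dense in $Z$, so $U_V=\Phi^{-1}(Z_0)$ is a genuine dense open of $X$.
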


\begin{proof}
After replacing $X$ by some Zariski dense open subset, we may assume that there exists $d\geq 1$ such that $\deg V_x=d$ for all $x\in X(k).$
Let $M_d$ be the variety parametrizing all effective cycles in $\A^N_k$ of degree $d.$ Then $f$ induces a rational map 
$$F:X\times M_d\dashrightarrow X\times M_d\text{ given by }(x,W)\mapsto (g(x), A(x)(W)).$$ 
Let $\pi_1:X\times M_d\to X$ be the projection onto the first coordinate.

At the expense of replacing $X$ by some Zariski dense open subset, we may assume that the map $s$ given by  $x\mapsto (x, V_x)$ is a section from $X$ to $X\times M_d$. For any point $W\in M_d(k)$, there is a morphism 
$$\chi_{W}: X\times \GL_N\to X\times M_d\text{ given by }(x,g)\to (x, g(W)).$$  
We note that $\chi_{W}(X\times \GL_N)=\chi_{g(W)}(X\times \GL_N)$ for any $g\in \GL_N(k).$

The next Lemma yields (essentially) the conclusion in Proposition~\ref{lemisofiberopen}.
\begin{lem}\label{lemexvkiso}
There exists $W\in M_d(k)$ and a Zariski dense open set $U$ of $X$ such that $s(U)\subseteq \chi_{W}(X\times \GL_N)$.
\end{lem}

\begin{proof}[Proof of Lemma~\ref{lemexvkiso}.]
Let $K$ be an algebraically closed uncountable field containing $k.$
By \cite{Amerik2008} (see also \cite{dynamical-Rosenlicht}),  there exists a $K$-point $\alpha\in X(K)$ such that $g^n(\alpha)\not\in \sB$ for all $n\geq 0$ and its orbit is Zariski dense in $X_K.$

For all $n\in \N_0$, we have $s(g^n(\alpha))\in \chi_{V_\alpha}(X_K\times \GL_N)$. Hence $s^{-1}\left(\chi_{V_\alpha}\right)(X_K\times \GL_N)$ is a Zariski dense constructible set in $X_K$ and so,  it contains a Zariski dense open set $U_K$ in $X_K$.  Since $X(k)$ is Zariski dense in $X_K$, there exists a point $\beta\in U_K\cap X(k)$. It follows that there exists $g\in \GL_N(k)$ such that $g(V_\alpha)=V_\beta$. Then we have $V_\beta\in M_d(k)$ and $\chi_{V_\alpha}(X_K\times \GL_N)=\chi_{V_\beta}(X_K\times \GL_N)$.  Since $s$ and $\chi_{V_\beta}(X_K\times \GL_N)$
are both defined over $k$, then $s^{-1}\left(\chi_{V_\beta}\right)(X_K\times \GL_N)$ is a Zariski dense constructible set in $X_K$ which is defined over $k$. It follows that as a $k$-variety, $s^{-1}\left(\chi_{V_\beta}\right)(X\times \GL_N)$ is a Zariski dense constructible set in $X$. Then there exists a Zariski dense open set $U$ of $X$ such that $s(U)\subseteq \chi_{V_\beta}(X\times \GL_N)$, which concludes the proof of Lemma~\ref{lemexvkiso}.
\end{proof}

Now, let $U$ be as in the conclusion of Lemma~\ref{lemexvkiso}. 
Then for any $x_1,x_2\in U$ there are  $g_1,g_2\in \GL_N(k)$ such that $V_{x_1}=g_1(W)$ and $V_{x_2}=g_2(W)$. Therefore $V_{x_2}=g_2g_1^{-1}(V_{x_1})$, as desired in the conclusion of Proposition~\ref{lemisofiberopen}. 
\end{proof}

We observe that Proposition~\ref{lemisofiberopen} applies to each $V\in I$ and so,  we let $U_V$ be the Zariski open subset of $X$ satisfying the conclusion of Proposition~\ref{lemisofiberopen} with respect to the variety $V$. 
   
For any $V\in I$ and any points $\alpha, \beta\in U_V$, denote by $G^V_{\beta,\alpha}$ the set of $g\in \GL_N(k)$ such that $g(V_\beta)=V_\alpha$.   By Proposition \ref{lemisofiberopen}, the set $G^V_{\beta,\alpha}$ is nonempty, so let $g^V_{\beta,\alpha}$ be an element of $G^V_{\beta,\alpha}$. Then  $G^V_{\beta,\alpha}=g^V_{\beta,\alpha}G^V_{\beta,\beta}$; we note that $G^V_{\beta,\beta}$ is an algebraic subgroup of $\GL_N(k).$

The next result yields that the (a priori disjoint) sets $G_{\beta,\alpha}^V$ all contain some given sets $G_{\beta,\alpha}^S$ for a suitable invariant cycle $S\in I$.    
\begin{lem}\label{lemsdeterv}
There exists an effective invariant cycle $S\in I$ such that for any $V\in I$, there exists a Zariski dense open set $U\subseteq U_S\cap U_V$ with the property  that for any two points $x_1,x_2\in U(k)$ we have $G_{x_1,x_2}^S\subseteq G_{x_1,x_2}^V.$
\end{lem}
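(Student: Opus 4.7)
The plan is to exploit the Noetherian property of $\GL_N$ to find finitely many invariant cycles whose joint stabilizer is universally minimal among the subgroups $G^V_{\beta,\beta}$, to bundle them into a single invariant cycle $S$ using distinct positive integer multiplicities, and finally to verify the coset inclusion via an augmentation trick.

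First I would work with stabilizers of generic fibers. For each $V\in I$, the generic fiber $V_\eta\subseteq\A^N$ (over the function field of $X$) has a stabilizer $\mathcal{H}^V\subseteq\GL_N$ that is a closed algebraic subgroup, and on a suitable Zariski dense open of $X$ its specialization recovers $G^V_{x,x}$. Since $\GL_N$ is Noetherian and the collection $\{\mathcal{H}^V\colon V\in I\}$ is closed under finite intersection, the intersection $\bigcap_{V\in I}\mathcal{H}^V$ is realized by a finite sub-intersection, say $\mathcal{H}:=\mathcal{H}^{V_1}\cap\cdots\cap\mathcal{H}^{V_n}$.

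Next I would define the effective cycle $S:=c_1V_1+\cdots+c_nV_n$ with pairwise distinct positive integer multiplicities $c_1,\ldots,c_n$. Because invariance ($f_\#V_i=t_gV_i$) is $\Z$-linear in the cycle, $S$ belongs to $I$. After further decomposing each $V_i$ into its irreducible components (and choosing the $c_i$'s so that all resulting multiplicities are pairwise distinct), the standard distinct-coefficient argument shows that on a dense open where no accidental coincidences occur among the irreducible components of the $V_{i,x}$'s, any $g\in\GL_N$ preserving the cycle $S_x$ must fix each $V_{i,x}$ set-wise. Hence $\mathcal{H}^S=\bigcap_i\mathcal{H}^{V_i}=\mathcal{H}$, and in particular $\mathcal{H}^S\subseteq\mathcal{H}^V$ for every $V\in I$.

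For the coset inclusion I would use an augmentation trick. Given an arbitrary $V\in I$, set $S'':=S+cV$ with $c$ a positive integer distinct from all multiplicities appearing in $S$. The same coefficient-separation argument yields $\mathcal{H}^{S''}=\mathcal{H}^S\cap\mathcal{H}^V=\mathcal{H}=\mathcal{H}^S$. Choose a Zariski dense open $U\subseteq U_S\cap U_V\cap U_{S''}$ on which the pointwise identities $G^{S''}_{x,x}=G^S_{x,x}\subseteq G^V_{x,x}$ hold. For $x_1,x_2\in U(k)$, each $g\in G^{S''}_{x_1,x_2}$ satisfies simultaneously $g(S_{x_1})=S_{x_2}$ and $g(V_{x_1})=V_{x_2}$ by the distinct-coefficient separation, so $G^{S''}_{x_1,x_2}\subseteq G^S_{x_1,x_2}\cap G^V_{x_1,x_2}$. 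Since $G^{S''}_{x_1,x_2}$ and $G^S_{x_1,x_2}$ are nonempty left cosets of the same subgroup $G^S_{x_1,x_1}$ (as equality of the stabilizers propagates to equality of coset sizes), they must coincide, giving the desired inclusion $G^S_{x_1,x_2}=G^{S''}_{x_1,x_2}\subseteq G^V_{x_1,x_2}$.

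The main obstacle will be the descent step: translating the generic-fiber containment $\mathcal{H}^S\subseteq\mathcal{H}^V$ into the pointwise containment of stabilizers on a Zariski dense open, and ensuring the distinct-coefficient separation descends cleanly to each fiber. This requires that the decomposition of each $V_{i,x}$ into irreducible components remains combinatorially constant and that no accidental coincidences between components of distinct $V_i$'s occur on $U$; these are standard constructibility and flatness considerations, but they must be carried out with some care since the open set $U$ is allowed to depend on $V$.
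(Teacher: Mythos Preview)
Your approach is essentially the same as the paper's: use Noetherianity of stabilizers to find finitely many $V_1,\dots,V_s$, combine them into a single $S$ with carefully chosen multiplicities, and for each $V$ pass to the augmented cycle $S+cV$ to extract the coset inclusion. The paper realizes your ``generic fiber'' by picking a very general point $\beta$ over an algebraically closed field $K$ of cardinality strictly larger than $|I|$, so that $\beta\in\bigcap_{V\in I}U_V(K)$, and then descends to $k$ by noting that the relevant loci are defined over $k$; your scheme-theoretic generic point is an equivalent device, and the descent you flag as the main obstacle is exactly what the paper's large-field trick is designed to handle.

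One point to tighten: ``pairwise distinct multiplicities'' is not the correct condition when irreducible components of different $V_i$ may coincide in a fiber, since then the multiplicities add. The paper takes $c_i=(M+1)^{i-1}$ with $M$ the maximal multiplicity appearing among the $(V_i)_\beta$, so that the multiplicity of any irreducible component of $S_\beta$ has a unique base-$(M+1)$ expansion recording its multiplicity in each $(V_i)_\beta$; similarly, for the augmentation it takes $M_V:=\max\{A_S,A_V\}+1$ and $W:=S+M_V\cdot V$. With this choice, $g(W_{x_1})=W_{x_2}$ forces $g(S_{x_1})=S_{x_2}$ and $g(V_{x_1})=V_{x_2}$ separately, and your coset argument (or the paper's conjugation via $g_{\beta,x_i}$) concludes.
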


\begin{proof}

We note that, if $V_1,\dots, V_s$ are invariant effective cycles in $I$, then $\sum_{i=1}^sn_iV_i$ (for arbitrary $n_i\in \N$) is also contained in $I$.

Let $K$ be an algebraically closed field containing $k$ such that the cardinality of $K$ is strictly larger that the cardinality of $I$. 
Then there exists a point $\beta\in X(K)$ such that 
\begin{equation}
\label{point beta}
\beta\in \bigcap_{V\in I}U_{V}(K).
\end{equation}
For any $V\in I$, denote by $V_\beta:=V\cap \pi^{-1}(\{\beta\})$ the fiber of $V_K$ at the point $\beta\in X(K)$ from \eqref{point beta}. Let $$G_\beta^V:=\{g'\in \GL_N\colon g'(V_\beta)=V_\beta\};$$ 
then $G_\beta^V$ is an algebraic subgroup of $\GL_N$.
We also let 
$$G_\beta:=\bigcap_{V\in I}G^V_\beta;$$ 
then  there exists a finite subset $\{V_1,\dots,V_s\}\subseteq I$ such that $$G_\beta:=\bigcap_{i=1}^sG^{V_i}_\beta.$$ 
Let $M$ be the maximum of the multiplicities of all irreducible components of $(V_1)_\beta,\dots,(V_s)_\beta$ and let 
$$S:=\sum_{i=1}^s(M+1)^{i-1}V_i\in I.$$ 
Then for any $g'\in \GL_N(K)$, we have $g'(S_\beta)=S_\beta$ if and only if $g'((V_i)_\beta)=(V_i)_\beta$ for all $i=1,\dots,s.$ In other words, $$G^{S}_\beta=\bigcap_{i=1}^sG^{V_i}_\beta=G_\beta.$$
For any $V\in I$, denote by $A_V$ the maximum of all multiplicities of all irreducible components of $V$. Now for any $V\in I$, let $$M_V:=\max\{A_V,A_S\}+1\text{ and }W:=S+M_V\cdot V\in I,$$ 
and also let $U:=U_{W}\cap U_V\cap U_S$  
where the open sets $U_W$, $U_V$ and $U_S$ satisfy the conclusion of Proposition~\ref{lemisofiberopen}. For any $x_1,x_2\in U(k)$, we claim that 
\begin{equation}
\label{inclusion G x_1 x_2}
G_{x_1,x_2}^S\subseteq G_{x_1,x_2}^V. 
\end{equation}
Since both $G^S_{x_1,x_2}$ and $G^V_{x_1,x_2}$ are defined over $k$ and $k$ is algebraically closed, we only need to show the inclusion \eqref{inclusion G x_1 x_2} after base change $K/k$. So, we only need to show that $G_{x_1,x_2}^S(K)\subseteq G_{x_1,x_2}^V(K)$.
Since $\beta\in  U_{W}(K)$, for any $i=1,2$, there exists $g_{\beta,x_i}$ satisfying 
$$g_{\beta,x_i}\left(S_{\beta}+M_V\cdot V_\beta\right)=S_{x_i}+M_V\cdot V_{x_i}.$$ 
It follows that $g_{\beta,x_i}(S_{\beta})=S_{x_i}$ and $g_{\beta,x_i}(V_\beta)=V_{x_i}.$  Then we have $$G_{x_1,x_2}^S(K)=g_{\beta,x_2}G_\beta^S(K)g_{\beta,x_1}^{-1}= g_{\beta,x_2}G_\beta(K)g_{\beta,x_1}^{-1}$$ and $$G_{x_1,x_2}^V(K)=g_{\beta,x_2}G_\beta^V(K)g_{\beta,x_1}^{-1}.$$
Since $G_\beta\subseteq G^V_\beta$, we have $G_{x_1,x_2}^S(K)\subseteq G_{x_1,x_2}^V(K)$, as desired in Lemma~\ref{lemsdeterv}.
\end{proof}

Now we have all ingredients necessary to finish the proof of Theorem~\ref{thmrateninvar}. 

\begin{proof}[Proof of Theorem~\ref{thmrateninvar}.]
Fix a point $\alpha\in U_S(k)$. Then $G_\alpha:=G_{\alpha,\alpha}^S$ is an algebraic subgroup of $\GL_N.$
Let $\sG$ be the subvariety of $U_S\times \GL_N$ of points $(x,g')\in U_S\times \GL_N$ such that $S_x=g'(S_\alpha)$.  Lemma~\ref{lemisofiberopen} yields that $\sG$ is a $G_\alpha$-torsor on $U_S.$ Denote by 
$$p:\sG\subseteq U_S\times \GL_N\to U_S$$ 
the projection on the first coordinate. For any $x\in U_S,$ let $G_x:=G_{\alpha,x}^S$.
We note that for any $x_1,x_2\in U_S$, we have $G^S_{x_1,x_2}=G^S_{x_2}G_{x_1}^{-1}$. Note that for any $x\in (U_S\setminus \sB)\cap g|_{U_S\setminus \sB}^{-1}(U_S)$, we have $g'\in G_x$ and $A(x)g'(S_\alpha)=A(x)S_x=S_{g(x)}.$
Then $f$ induces a dominant rational map $F$ on $\sG$ defined by  
$(x,g')\mapsto (g(x), A(x)g')$.

Let $G^0_\alpha$ be the connected component of $G_\alpha$; also let $\mu:= G_\alpha/G^0_\alpha$, which is a finite group. Then the quotient $Y':=\sG/G^0_\alpha$ is a $\mu$-torsor on $U_S.$ Observe that $F$ induces a rational self-map $f'$ on $Y'$ such that $\pi'\circ f'=g\circ \pi'$ where $\pi': Y'\to U_s$ is the projection to the base $U_S.$ 
Let $Y$ be an irreducible component of $Y'$. Then there exists $\ell\geq 1$ such that $f^{'\ell}(Y)=Y.$ Let $g':=f^{'\ell}|_Y$ and $\tau:= \pi'|_Y$. Then we have $\tau\circ g'=g^\ell\circ \tau$.

Now we consider the dominant rational map $f_Y:=\id\times_X f$ on the base change $Y\times_X X\times \A^N$. Let $\sG_Y:=Y\times_{U_S}\sG$. Then $\sG_Y/G^0_\alpha=Y\times_{U_S}Y'$ has a section 
$$T_0:Y\to Y\times_{U_S}Y\subseteq \sG_Y/G^0_\alpha\text{ sending }y\to (y,y).$$   The preimage $\sG_Y^0$ of $T(Y)$ in $\sG_Y$ is a connected component of $\sG_Y$ which is a $G_\alpha^0$ torsor on $Y.$ By \cite{Colliot-Thelene1992},
there exists a rational section 
$$T: Y\to \sG_Y^0\text{ satisfying }p_Y\circ T=\id,$$ 
where $p_Y$ is the projection from $\sG_Y$ to $Y$ and $T(\alpha)=1 \in G_\alpha$ (i.e., $T(\alpha)$ is the identity element of $G_\alpha$). We note that for any $x\in Y$, we have $T(x)\in G_{\tau(x)}.$

Let $h$ be the rational map on $Y\times \A^N_k$ defined by $(x,y)\mapsto (x,T(x)y).$ Let $V\in I$ be an invariant subvariety of $X\times \A^N_k.$
For any point $x\in Y$, denote by $(\tau\times_X\id)^{-1} (V)_x$ the fiber of $(\tau\times_X\id)^{-1} (V)$ at $x$. As a subvariety in $A^N_k$, we have $(\tau\times_X\id)^{-1} (V)_x=V_{\tau(x)}.$

By Lemma \ref{lemsdeterv}, there exists a Zariski dense open set $U\subseteq U_S\cap U_V$ such that for any two points $x_1,x_2\in U(k)$ we have $G_{x_1,x_2}^S\subseteq G_{x_1,x_2}^V.$

Pick a point $u_1\in \tau^{-1}U(k)$. Let $V_0:=T(u_1)^{-1}((\tau\times_X\id)^{-1} (V)_{u_1})$. For any $u_2\in \tau^{-1}U(k)$, let $x_1=\tau(u_1)$ and $x_2=\tau(u_2)$.
Since $T(u_i)\in G_{x_i}$ for $i=1,2$, we have $T(u_2)T(u_1)^{-1}\in G_{x_1,x_2}^S\subseteq G_{x_1,x_2}^V.$ It follows that $T(u_2)T(u_1)^{-1}(V_{x_1})=V_{x_2}$. We have 
$$V_0=T(u_1)^{-1}((\tau\times_X\id)^{-1} (V)_{u_1})=T(u_1)^{-1}(V_{x_1})$$ 
$$=T(u_2)^{-1}(T(u_2)T(u_1)^{-1}(V_{x_1}))
=T(u_2)^{-1}(V_{x_2})=T(u_2)^{-1}((\tau\times_X\id)^{-1} (V)_{u_2}).$$
Then we get $h^{-1}(V)=Y\times V_0$, which concludes the proof of Theorem~\ref{thmrateninvar}.
\end{proof}


\subsection{Proof of Theorem~\ref{thmzardenserat}}
\label{subsection proof theorem dense orbits}


We work under the hypotheses of Theorem~\ref{thmzardenserat}. 

Let $\sB$ be the set of points $x\in X$ such that $f$ is not a locally isomorphism on the fiber $\pi^{-1}(x)$. Then $\sB$ is a proper closed subset of $X$.

If there exists a nonconstant rational function $\psi$ on $X$ invariant under $g$, then the nonconstant rational function $\psi\circ \pi$ on $X\times \A^N_k$ is invariant under $f$.
So Theorem~\ref{thmzardenserat} holds.
Now we may assume that there is no nonconstant rational function on $X$ invariant under $g$. Then there exists a Zariski dense orbit in $X(k)$ under the action of $g$.
Moreover, for any Zariski dense open set $U$ of $X$,  since the pair $(U,g|_U)$ is birationally equivalent to $(X,g)$, then there exists a point $x_U\in U(k)$  with a Zariski dense orbit under the action of $g|_U$.

\smallskip

Let $I$ be the set of all invariant subvarieties in $X\times \A^N_k$ for which every irreducible component of $V$ dominates $X$ under the projection map $X\times \A^N\lra X$.

Theorem~\ref{thmrateninvar} yields that (perhaps, at the expense of  replacing $f$ by a suitable iterate) 
 there exists an irreducible variety $Y$ endowed with a dominant rational self-map 
$$g':Y\dashrightarrow  Y$$ 
and a generically finite map $\tau: Y\dashrightarrow X$ satisfying $\tau\circ g'=g\circ \tau$ 
 such that there exists 
a birational map $h$ on $Y\times \A^N_k=Y\times_{X} X\times\A^N_k$ of the form  $(x,y)\mapsto (x,T(x)y)$ where $T(x)\in \GL_N(k(Y))$ such that  for any subvariety $V\in I$, we have 
$$h^{-1}((\tau\times_X\id)^{\#} (V))=Y\times V_0\subseteq Y\times \A^N_k,$$  where $V_0$ is a subvariety of $\A^{N}_k$.  Let $f':Y\times \A^N\to Y\times \A^N$ be the rational map defined by 
$$g'\times_{(X,g)} f:(x,y)\mapsto (g'(x), A(\tau(x))y).$$   
We have $(\tau\times \id)\circ f'=f\circ (\tau\times \id)$.
Let 
$$F:=h^{-1}\circ f'\circ h: Y\times \A^N\to Y\times \A^N.$$ 
Then $F$ is the map $(x,y)\mapsto (g'(x),B(x)y)$ where $B(x):=T^{-1}(g'(x))A(\tau(x))T(x)$.
Let $\rho:=(\tau\times \id)\circ h$. Then we have $\rho\circ F=f\circ \rho$. For any $V\in I$, we see that $\rho^{\#}(V)$ is invariant by $F$ and it has the form $Y\times V_0.$

After replacing $Y$ by some smaller open subset, we may assume that $\rho$ is a regular morphism.  Furthermore, we may assume that $\rho$ is locally finite.
Let 
$$p: Y\times \A^N\to Y$$ 
be the projection to the first coordinate. 
Let $\sB'$ be the set of points $x\in Y$ such that $F$ is not locally an   isomorphism on the fiber $p^{-1}(x)$. Then $\sB'$ is a proper closed subset of $Y$. 
There exists a point $\alpha\in X(k)$, such that $\OO_g(\alpha)\cap \sB=\emptyset$; here we use the assumption about $(X,g)$ being a good dynamical pair (so, in particular, there exists a point with a Zariski dense orbit contained in the complement of $\sB$). At the expense of replacing $\alpha$ by some $g^n(\alpha)$, we may suppose that there exists a point $\beta\in Y$ such that  $\tau(\beta)=\alpha$ and so, $\OO_{g'}(\beta)\cap \sB'=\emptyset$. Also, we may suppose that $T(\beta)=\id$.

\smallskip

For any $x\in X$ and $V\in I$, denote by $V_x:=\pi^{-1}(x)\cap V\subseteq \A^{N}_k$.
By Lemma \ref{lemisofiberopen},
there exists a Zariski open set  $U_V$ of $X$ such that  such that for any points $x_1, x_2\in U_V(k)$, there exists $g'\in\GL_N(k)$ such that $V_{x_2}=g'(V_{x_1}).$
There exists $m\geq 0$, such that $g^m(\alpha)\in U_V$.  There exists an open set $U'$ containing $\alpha$, such that $g^i(U')\cap \sB=\emptyset$ for $i=0,\dots,m$ and moreover, $g^m(U')\subseteq U_V.$
Then for any points $x_1, x_2\in U'(k)$, there exists $g'\in\GL_N(k)$ such that $$A(g^{m-1}(x_2))\cdots A(x_2)V_{x_2}=g'A(g^{m-1}(x_1))\cdots A(x_1)(V_{x_1});$$ it follows that 
$$V_{x_2}=\left(A(g^{m-1}(x_2))\cdots A(x_2)\right)^{-1}g'A(g^{m-1}(x_1))\cdots A(x_1)(V_{x_1}).$$ 
So we may replace $U_V$ by $U'$ and therefore assume that $\alpha\in U_V$ for all $V\in I.$

For any $V\in I$, any points $x_1,x_2$ in $U_V$, denote by $G^V_{x_1,x_2}$ the set of $g'\in \GL_N(k)$ such that $g'(V_{x_1})=V_{x_2}$. Then there exists an element $g^V_{x_1,x_2}\in G^V_{x_1,x_2}$. We note that $G^V_\alpha:=G^V_{\alpha,\alpha}$ is an algebraic subgroup of $\GL_N(k).$
Let $G_\alpha:=\cap_{V\in I}G^V_\alpha$; this is an algebraic subgroup of $\GL_N.$ 
For any $V\in I$, we have $\rho^{\#}(V)=\rho^{-1}(V)=Y\times V_\alpha$.  Hence   $B(x)\in G^V_\alpha$ for all $x\in Y\setminus \sB'$ and thus $B(x)\in G_\alpha$ for all $x\in Y\setminus \sB'.$

Theorem~\ref{thmzardenalgg} 
 shows that either there exists a point $y\in \A^N(k)$ such that $G_\alpha\cdot y$ is Zariski dense in $\A^N$ or there exists a nonconstant rational function $\phi\in k(\A^N)$ such that $\phi\circ g'=\phi$ for all $g'\in G_\alpha$.

At first, we suppose that there exists a point $y\in \A^N(k)$ such that $G_\alpha\cdot y$ is Zariski dense in $\A^N$. Furthermore, Theorem~\ref{thmzardenalgg} yields that \emph{each} point in a dense open subset of $\A^N$ would have a Zariski dense orbit under the action of $G_\alpha$. Now, let $\gamma:=(\alpha,y)\in X\times \A^N$. Denote by $Z$ the Zariski closure of $\OO_f(\gamma)$.  Since $\OO_g(\alpha)$ is Zariski dense in $X$, then $Z$ has at least one irreducible component which dominates $X$. Let $V$ be the union of all irreducible components of $Z$ which dominate $X$; then $V\in I$. There exists $m\geq 0$ such that  $f^m(\alpha)\in V$ and so, $f^n(\alpha)\in V$ for all $n\geq m.$

Let $\gamma '$ be the unique preimage of $\gamma$ under $\rho$ in the fiber $\pi^{-1}(\beta).$  Since we have assumed that $T(\alpha)=\id$, we have $\gamma '=(\beta,y).$ Then $$f^{'m}(\gamma ')\in \rho^{-1}(V)=\rho^{\#}(V)=Y\times V_\alpha.$$
It follows that $B\left(g^{'(m-1)}(\beta)\right)\cdots B(g'(\beta))\cdot B(\beta)y\in V_\alpha$. Since $$B\left(g^{'(m-1)}(\beta)\right)\cdots B(g'(\beta))\cdot B(\beta)\in G_\alpha\subseteq G_\alpha^V,$$ 
we have $y\in V_\alpha$. Then we have $G_\alpha\cdot y\subseteq V_\alpha$. Since $G_\alpha\cdot y$ is Zariski dense in $\A^N$, we have $V_\alpha=\A^N$. Then $\rho^{-1}(V)=Y\times A^N$. It follows that $V=X\times \A^N.$ So $\OO_f(\gamma)$ is Zariski dense in $X\times \A^N.$ 

Furthermore, we see that since any $\gamma=(\alpha,y)$ would have a Zariski dense orbit under $f$, where $\alpha$ is a point with a Zariski dense orbit under $g$ avoiding $\sB$ and therefore (since the pair $(X,g)$ is good), $\alpha$ may be chosen in any open subset of $X$, while $y$ is any point in a given open subset of $\A^N$, we have that there exist points with Zariski dense orbits under $f$ in any nontrivial, open subsets of $X\times \A^N$. Hence, for any other dynamical pair $(W,h)$, which is birationally equivalent to $(X\times \A^N,f)$, there exist $k$-points in $W$ with a Zariski dense orbit under $h$ (see Remark~\ref{rem:birationally equivalent}).

Now we assume that there exists a nonconstant rational function $\phi\in k(\A^N)$ such that $\phi\circ g'=\phi$ for all $g'\in G_\alpha$.
Let $\chi$ be the rational function on $Y\times A^N$ defined by $(x,y)\mapsto \phi(y)$; it is invariant by $f'$.  Let $\psi$ be the rational function on $X\times \A^N$ defined by 
$$\psi(x)=\prod_{x'\in \rho^{-1}(x)}\chi(x').$$ 
Then $\psi$ is a nonconstant rational function on $X\times \A^N$ invariant under $f$; according to Remark~\ref{rem:birationally equivalent}, each dynamical pair $(W,h)$, which is equivalent with $(X\times \A^N,f)$, also fixes some  nonconstant fibration. This concludes the proof of Theorem~\ref{thmzardenserat}.


\section{A special class of automorphisms of the affine space}
\label{section automorphisms of affine space}


In this section we study in-depth the special case in Theorem~\ref{thmrateninvar} when $X=\A^1$ and $f:\A^1\times \A^N\lra \A^1\times \A^N$ is an automorphism given by $(x,y)\mapsto (x+1, A(x)y)$ for some $A\in\GL_N(k[x])$. This leads to proving Theorem~\ref{thmautoinvar} and also to developing a theory of \emph{straight models} (see Subsection~\ref{section straight model}) for linear transformations $A\in \GL_N(k[x])$, which we believe is of independent interest. In particular, we believe our results would be helpful for understanding better which points in $\A^1_k\times \A^N_k$ have Zariski dense orbits under an automorphism $f$ as above.


\subsection{Proof of Theorem~\ref{thmautoinvar}}
\label{section invariant subvarieties}


We work under the hypotheses of Theorem~\ref{thmautoinvar}. So, $N$ is a positive integer,  $A\in \GL_N(k[x])$ and $f:\A^1\times \A^N\lra \A^1\times \A^N$ is an automorphism given by $(x,y)\mapsto (x+1, A(x)y)$.

For each $x\in \A^1(k)$, and each subvariety $V$ invariant under $f$, we let  $$V_x:=\pi^{-1}(x)\cap V\subseteq \A^{N}_k.$$ 
The next result is a more precise version of Proposition~\ref{lemisofiberopen} in our setting. 

\begin{lem}\label{lemisofiber}
For each $x\in \A^1(k)$, there exists $g_x\in\GL_N(k)$ such that $V_x=g_x(V_0).$
\end{lem}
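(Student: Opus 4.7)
The plan is to bootstrap from Proposition~\ref{lemisofiberopen}, exploiting the fact that in this setting $f$ is a genuine automorphism. The crucial observation is that $A \in \GL_N(k[x])$ forces $\det A \in k[x]^\times = k^\times$, so $A(x) \in \GL_N(k)$ for \emph{every} point $x \in \A^1(k)$; consequently $f$ restricts to an isomorphism between consecutive fibers $\pi^{-1}(x) \to \pi^{-1}(x+1)$ given by $y \mapsto A(x)y$. Before invoking Proposition~\ref{lemisofiberopen}, I would first observe that every irreducible component of $V$ dominates $\A^1$: if some component $W$ satisfied $W \subseteq \{x_0\} \times \A^N$, then the distinct subvarieties $f^n(W) \subseteq \{x_0+n\} \times \A^N$ ($n\in\Z$) would all sit inside $V$, contradicting that $V$ has only finitely many components. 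Hence $V$ belongs to the set $I$ of Section~\ref{section rational maps}, and Proposition~\ref{lemisofiberopen} applies, producing a dense open subset $U_V \subseteq \A^1$ on which any two fibers of $V$ are $\GL_N(k)$-translates of one another.

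Next, I would use the invariance $f(V)=V$ together with the fact that the restriction of $f$ to each fiber is a linear isomorphism to derive the cocycle identity
\[
V_{x+n} = M_n(x) \cdot V_x, \qquad M_n(x) := A(x+n-1) A(x+n-2) \cdots A(x) \in \GL_N(k),
\]
valid for every $x \in \A^1(k)$ and every $n \in \N_0$ (and symmetrically for $n<0$ using $A(\cdot)^{-1}$).

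Finally, fix an arbitrary $x \in \A^1(k)$. Since $\A^1(k) \setminus U_V$ is finite and $\Char(k) = 0$, the embedding $\N \hookrightarrow k$ lets me choose $n \in \N$ large enough that both $n$ and $x+n$ lie in $U_V(k)$. Proposition~\ref{lemisofiberopen} then supplies some $g \in \GL_N(k)$ with $V_{x+n} = g \cdot V_n$, and combining this with the two cocycle identities $V_{x+n} = M_n(x) V_x$ and $V_n = M_n(0) V_0$ yields
\[
V_x = M_n(x)^{-1} \cdot g \cdot M_n(0) \cdot V_0,
\]
so $g_x := M_n(x)^{-1} g M_n(0) \in \GL_N(k)$ is the desired matrix. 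There is no serious obstacle; the argument is essentially a propagation of the generic conclusion of Proposition~\ref{lemisofiberopen} to every point of $\A^1(k)$ via the $\Z$-action induced by iterating $f$, the only delicate point being to note that $A$ is invertible at \emph{every} point, which is a feature peculiar to the automorphism hypothesis.
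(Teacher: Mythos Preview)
Your proof is correct, but it takes a different route from the paper's. You invoke Proposition~\ref{lemisofiberopen} from Section~\ref{section rational maps} as a black box to obtain the dense open set $U_V$, and then use the cocycle $V_{x+n}=M_n(x)V_x$ to propagate the conclusion to every point by translating both $0$ and $x$ into $U_V$. The paper instead gives a self-contained argument that never cites Proposition~\ref{lemisofiberopen}: it introduces the parameter space $M_d$ and the section $s:x\mapsto(x,V_x)$, observes directly that the constructible set $\{x:V_x\in\GL_N(k)\cdot V_0\}$ already contains every integer $n\in\Z$ (via the same cocycle identity you wrote down), hence is cofinite in $\A^1$, and then notes that this set is invariant under $x\mapsto x+1$, forcing its finite complement to be empty in characteristic $0$. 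So both arguments hinge on the translation action, but the paper uses it globally to show the good locus equals $\A^1$, whereas you use it pointwise to drag each $x$ into a good locus produced by the heavier uncountable-field machinery of Section~\ref{section rational maps}. The paper's route is more elementary and fits the stated purpose of Section~\ref{section automorphisms of affine space}---to give a simpler, independent treatment of the $\A^1$ case---while yours is valid but leans on exactly the general result this section is meant to illuminate.

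One small sharpening: your claim that the distinct $f^n(W)$ ``contradict that $V$ has only finitely many components'' is more cleanly argued by noting that, since $f$ is an automorphism with $f(V)=V$, it permutes the finitely many irreducible components of $V$; hence any component $W$ would satisfy $f^m(W)=W$ for some $m\ge 1$, which is impossible if $\pi(W)=\{x_0\}$ in characteristic $0$.
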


\begin{proof} 
Let $d=\deg V_0$; then $\deg V_x=d$ for all $x\in \A^1(k)$. 
Let $M_d$ be the variety parametrizing all subvarieties of $\A^N_k$ of degree $d.$ Then $f$ induces an automorphism 
$$F:\A^1_k\times M_d\lra \A^1_k\times M_d\text{ defined by }(x,W)\mapsto (x+1, A(x)(W)).$$ 
Denote by $\pi_1:\A^1_k\times M_d\lra \A^1_k$ the projection to the first coordinate. There exists a section $s:\A^1_k\lra \A^1_k\times M_d$ defined by $x\mapsto (x, V_x)$ and there exists a morphism 
$$\chi: \A^1_k\times \GL_N\lra \A^1\times M_d\text{ given by }(x,g)\to (x, g(V_0)).$$    
For all $n\in \Z$, we have that $s(n)\in \chi(\A^1_k\times \GL_N)$; therefore   $s^{-1}\left(\chi(\A^1_k\times \GL_N)\right)$ is a Zariski dense constructible set in $\A^1_k$, thus it is a Zariski dense open subset of $\A^1_k$.

Observe that $s(\A^1_k)$ and $\chi(\A^1_k\times \GL_N)$ are invariant under $F$ and so, 
$$s(\A^1_k)\cap \chi(\A^1_k\times \GL_N)\text{ is also invariant under $F$.}$$
Thus $s^{-1}\left(\chi(\A^1_k\times \GL_N)\right)$ is invariant under $x\mapsto x+1$. Then $s^{-1}\left(\chi(\A^1_k\times \GL_N)\right)=\A^1_k.$  Therefore for any $x\in \A^1(k)$, there exists $g_x\in\GL_N(k)$ such that $V_x=g_x(V_0).$
\end{proof}

Let $\sG^V:=\{(x,g)\in \A^1_k\times \GL_N(k)\colon g(V_0)=V_x\}.$ Then $\sG^V$  is a subvariety of $\A^1_k\times \GL_N(k).$ Denote by $p_V:\sG^V\lra \A^1_k$ the projection onto the first coordinate.  For each $x\in \A^1(k)$, let  $G^V_x:=p_V^{-1}(x)$.  
We have $G_x^V=g_xG_0^V$. 

Let $I$ be the set of all invariant subvarieties in $\A^1_k\times \A^N_k$. Set $\sG:=\cap_{V\in I}\sG^V$; it is a subvariety of $\A^1_k\times \GL_N(k).$ 
Denote by $p:\sG\to \A^1_k$ the projection onto the first coordinate.  Set $G_x:=p^{-1}(x)$ for all $x\in \A^1(k).$ 
We have $G_x=g_xG_0$.  Then $\sG$ is a $G_0$-torsor on  $\A^1_k$; in the next result we will show that $\sG$ must be trivial.
 
\begin{lem}\label{lemtorsortrivialafl}
Any $G_0$-torsor $\sG$ on  $\A^1_k$ is trivial.
\end{lem}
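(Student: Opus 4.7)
The plan is to establish the vanishing $H^1_{\text{\'et}}(\A^1_k,G_0) = 0$ by peeling off layers in the structure of the linear algebraic group $G_0$ in characteristic zero, using at each stage that the affine line over an algebraically closed field of characteristic zero is cohomologically very simple.

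First, I would reduce to the case of connected $G_0$ via the exact sequence $1 \to G_0^{\circ} \to G_0 \to \pi_0(G_0) \to 1$, where $\pi_0(G_0)$ is finite. The quotient $\sG/G_0^{\circ} \to \A^1_k$ is a $\pi_0(G_0)$-torsor, hence a finite \'etale cover of $\A^1_k$. Since $k$ is algebraically closed of characteristic zero, $\A^1_k$ is simply connected, so this cover is a disjoint union of copies of $\A^1_k$. Picking one component gives a section of $\sG/G_0^{\circ} \to \A^1_k$, whose preimage in $\sG$ is a $G_0^{\circ}$-subtorsor over $\A^1_k$. A trivialization of this subtorsor trivializes $\sG$, so it suffices to handle the connected case.

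Next, assuming $G_0$ connected, I would use the Levi decomposition $G_0 = R_u(G_0) \rtimes L$ available in characteristic zero, with $R_u(G_0)$ the unipotent radical and $L$ a reductive complement. The induced $L$-torsor $\sG/R_u(G_0)$ on $\A^1_k$ is trivial by the Raghunathan--Ramanathan theorem: every principal bundle under a connected reductive group on the affine line, over an algebraically closed field, is trivial. A section of this reduces the problem to triviality of a torsor under $R_u(G_0)$. For this, I would take a central composition series of $R_u(G_0)$ whose successive quotients are all isomorphic to $\G_a$ and argue by induction: the obstruction at each step lies in $H^1(\A^1_k,\G_a) = H^1_{\text{Zar}}(\A^1_k,\OO_{\A^1_k}) = 0$, so the torsor splits.

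The main obstacle is the invocation of the Raghunathan--Ramanathan theorem, which is by far the deepest input. Given the relatively elementary flavor of the rest of the paper, a self-contained alternative would exploit the inclusion $G_0 \subseteq \GL_N$: the induced $\GL_N$-torsor $\sG \times^{G_0} \GL_N$ corresponds to a rank-$N$ vector bundle on $\A^1_k = \Spec k[x]$ and is therefore free, since any finitely generated projective module over a principal ideal domain is free. A trivialization yields a morphism $\A^1_k \to \GL_N/G_0$, and after the reduction to connected $G_0$, lifting this morphism to $\A^1_k \to \GL_N$ (which amounts precisely to a section of $\sG$) is again controlled by the same cohomological vanishing invoked above, applied to the successive quotients of a filtration of a Borel subgroup of $\GL_N$ compatible with the flag of $G_0$.
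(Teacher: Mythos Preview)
Your main argument is essentially the paper's proof: both reduce to the connected case using simple connectedness of $\A^1_k$, then invoke Raghunathan--Ramanathan for a reductive (respectively semisimple) quotient and handle the kernel by successive extensions with vanishing $H^1$. The only cosmetic difference is that the paper factors through the full radical $R$ (quotient semisimple, kernel solvable, so both $H^1(\A^1_k,\G_a)=0$ and $H^1(\A^1_k,\G_m)=\Pic(\A^1_k)=0$ are used), whereas you use the Levi decomposition (quotient reductive, kernel unipotent, only the $\G_a$-vanishing is needed). Either variant is fine, and yours is marginally cleaner since the Levi splitting is a semidirect product.

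Your proposed ``self-contained alternative'' does not actually bypass Raghunathan--Ramanathan. Trivializing the induced $\GL_N$-torsor is indeed elementary and reduces the question to lifting a morphism $\A^1_k\to\GL_N/G_0$ along $\GL_N\to\GL_N/G_0$; but the obstruction to such a lift is precisely the class in $H^1_{\text{\'et}}(\A^1_k,G_0)$ you are trying to kill, so nothing has been gained. The final clause about ``successive quotients of a filtration of a Borel subgroup of $\GL_N$ compatible with the flag of $G_0$'' is not meaningful as stated: an arbitrary closed subgroup $G_0\subseteq\GL_N$ has no associated flag, nor any reason to sit inside a Borel. If you want a route that genuinely avoids the deep input, you would need something like the fact that $\GL_N/G_0$ is affine when $G_0$ is reductive (so any map from $\A^1$ lifts Zariski-locally and the \v{C}ech obstruction lives in a solvable group), but making that precise is not shorter than what you already wrote.
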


\begin{proof} 
Let $G^0_0$ be the connected component of $G_0$, which is a normal subgroup of  $G_0$. Consider the exact sequence 
$$1\to G^0_0\to G_0\to G_0/G^0_0\to 1;$$ 
then we have the exact sequence 
$$H^1_{\acute{e}t}(\A^1_k, G^0_0)\to H^1_{\acute{e}t}(\A^1_k, G_0)\to H^1_{\acute{e}t}(\A^1_k, G_0/G^0_0).$$
Since $G_0/G^0_0$ is finite and $\A^1_k$ is simply connected,  then $H^1_{\acute{e}t}(\A^1_k, G_0/G^0_0)=1.$ So, we only need to show that $H^1_{\acute{e}t}(\A^1_k, G^0_0)=1.$

Let $R$ be the radical of $G^0_0$. Consider the exact sequence 
$$1\to R\to G^0_0\to G^0_0/R\to 1.$$
We get the exact sequence 
$$H^1_{\acute{e}t}(\A^1_k,R)\to H^1_{\acute{e}t}(\A^1_k, G^0_0)\to H^1_{\acute{e}t}(\A^1_k, G^0_0/R).$$
Since $G^0_0/R$ is semisimple, by \cite{Gille} (see also \cite{R-R})  we have $H^1_{\acute{e}t}(\A^1_k, G^0_0/R)=1.$
Thus we only need to show that $H^1_{\acute{e}t}(\A^1_k,R)=1.$
Since $R$ is solvable, all is left to prove is that $H^1_{\acute{e}t}(\A^1_k,\G_{m})$ and $H^1_{\acute{e}t}(\A^1_k,\G_{a})$ are trivial.
Obviously $H^1_{\acute{e}t}(\A^1_k,\G_{m})=\Pic(\A^1_k)$ is trivial and  $H^1_{\acute{e}t}(\A^1_k,\G_{a})=H^1(\A^1,O_{\A^1})=1$, by \cite[Theorem 3.5,~Chapter~III,~p.~215]{Hartshorne}. This  concludes our proof of Lemma~\ref{lemtorsortrivialafl}.
\end{proof}

So, there exists a section $T: \A^1\to \sG$ satisfying $p\circ T=\id$ and $T(0)=1\in G_0.$ Then $T\in \GL_N(k[x])$ and for all $x\in \A^1(k)$, we have  $T(x)\in g_xG_0.$
Let $h$ be the automorphism on $\A_k^1\times \A^N_k$ defined by $(x,y)\mapsto (x,T(x)y).$ Let $V\in I$ be an invariant subvariety of $\A^1_k\times \A^N_k$ under the action of $f$. 
Then for any $x\in \A^1(k)$, we have 
$$T(x)^{-1}(V_x)=T(x)^{-1}(g_x(V_0))=V_0,$$
and so, we have $h^{-1}(V)=\A^1_k\times V_0$, which concludes the proof of Theorem~\ref{thmautoinvar}.


\subsection{A straight model}
\label{section straight model}


In this section we continue our study of the dynamical properties of automorphisms $f$ of $\A^1\times \A^N$ of the form $(x,y)\mapsto (x+1, A(x)y)$. We  will prove Theorem~\ref{corperiod 0} (see Corollary~\ref{corperiod}) which says that each periodic subvariety of $\A^1\times \A^N$ under the action of $f$ has its period uniformly bounded depending only on the matrix $A$.

For any $A\in \GL_N(k[x])$,  denote by $f_A:\A^1_k\times\A^N_k \to \A^1_k\times\A^N_k$ the automorphism defined by $(x,y)\mapsto (x+1, A(x)y)$.

\begin{defi}
We say that $A$ and $A'$ are equivalent if $f_A$ and $f_{A'}$ are conjugate by an automorphism of $\A^1_k\times\A^N_k$ given by $(x,y)\mapsto (x,T(x)y)$, i.e., if there exists an element $T\in \GL_N(k[x])$, such that $A'(x)=T(x+1)^{-1}A(x)T(x).$ 
\end{defi}

Denote by $\sP^N$ the set of all subvarieties of $\A_k^N$ and denote by $\sP^N_1$ the set of all subvarieties of $\A^1_k\times \A_k^{N}$ which dominate $\A^1_k$ under the projection map $\A^1\times \A^N\lra \A^1$.  
We consider the map 
\begin{equation}
\label{definition r_0}
r_0:\sP^N_1\to \sP^N\text{ given by }V\mapsto V\cap \pi^{-1}(0)
\end{equation} 
and also, consider the section $\sigma: \sP^N\to \sP^N_1$ given by $W\mapsto \A^1_k\times W$; then we have $r_0\circ\sigma=\id.$

Let $I_A$ be the set of all subvarieties $V\in \sP^N_1$ which are invariant under $f_A.$ Let $I_A^0$ be the set of all subvarieties $W\in \sP^N$ such that $\sigma(W)$ is invariant under $f_A.$
We have $\sigma(I_A^0)\subseteq I_A$ and $I_A^0\subseteq r_0(I_A).$

\begin{lem}\label{lemrzeinjia}
The map $r_0|_{I_A}:I_A\lra \sP^N$ is injective.
\end{lem}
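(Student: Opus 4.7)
The plan is to prove injectivity by showing that any $V\in I_A$ is completely determined by the single fiber $V\cap \pi^{-1}(0)$ together with the matrix $A$. The argument naturally splits into two steps: propagating the fiber at $0$ to every integer fiber using the dynamics, and then recovering the full variety $V$ from its fibers over $\Z$ using a density argument in dimension one.

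First, I would exploit that $f_A$ is a \emph{regular automorphism}. For each integer $m$ the map $f_A$ sends $\pi^{-1}(m)$ isomorphically onto $\pi^{-1}(m+1)$ by the linear map $y\mapsto A(m)y$. Because $V\in I_A$ is $f_A$-invariant, we have $f_A(V)=V$, whence
$$V\cap \pi^{-1}(m+1)=A(m)\bigl(V\cap \pi^{-1}(m)\bigr)$$
for every $m\in\Z$ (for $m<0$ one uses $f_A^{-1}$, which exists since $f_A$ is an automorphism). By induction, every fiber $V\cap \pi^{-1}(n)$ with $n\in\Z$ is determined by $V\cap \pi^{-1}(0)$ and by $A$. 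Thus, if $V_1,V_2\in I_A$ satisfy $r_0(V_1)=r_0(V_2)$, then $V_1\cap \pi^{-1}(n)=V_2\cap \pi^{-1}(n)$ for every $n\in\Z$.

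Second, I would show that any $V\in \sP^N_1$ is recovered from the collection of fibers over any infinite subset $S\subseteq\A^1_k(k)$; in particular from its fibers over $\Z$. Since every irreducible component of $V$ dominates $\A^1_k$, it suffices to treat one such component $W$ and prove $W=\overline{\bigcup_{s\in S} W\cap \pi^{-1}(s)}=:Z$. Note that $\pi(Z)\supseteq S$ is infinite, and because $\dim \A^1_k=1$ any infinite subset of $\A^1_k$ is Zariski dense; hence $\pi(Z)=\A^1_k$ and $Z$ dominates $\A^1_k$. Suppose for contradiction that $Z\subsetneq W$, so $\dim Z\leq \dim W-1$. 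Over a dense open $U\subseteq\A^1_k$ the fibers of $W\to \A^1_k$ have dimension $\dim W-1$, while those of $Z\to\A^1_k$ have dimension $\leq \dim Z-1\leq \dim W-2$. Choosing any $s\in S\cap U$ (infinitely many exist, as $\A^1_k\setminus U$ is finite) contradicts $W\cap \pi^{-1}(s)\subseteq Z\cap \pi^{-1}(s)$ on comparison of dimensions. Hence $Z=W$.

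Combining the two steps yields $V_1=V_2$, proving injectivity. The heart of the argument is really step two, which is the mild technical point: one must notice that $\A^1_k$ being a curve makes every infinite set of integers Zariski dense, so no diophantine input beyond this elementary observation is needed. The dynamical invariance then does all the work of propagating fibers.
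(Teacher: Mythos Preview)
Your proof is correct and takes a genuinely different route from the paper's. The paper argues by forming $V_1\cup V_2\in I_A$, observing $r_0(V_1)=r_0(V_1\cup V_2)$, and then invoking Lemma~\ref{lemisofiber} (every fiber of an element of $I_A$ is a $\GL_N$-translate of its fiber at $0$) to deduce that $(V_1)_x$ and $(V_1\cup V_2)_x$ are both $\GL_N$-translates of the same variety; since one contains the other, they coincide for every $x$, whence $V_1=V_1\cup V_2$. Your argument bypasses Lemma~\ref{lemisofiber} entirely: you propagate the fiber at $0$ along the $\Z$-orbit via the explicit formulas $V_{m+1}=A(m)V_m$ (using that $f_A$ is a regular automorphism), and then recover $V$ as the closure of its fibers over the Zariski-dense set $\Z\subset\A^1_k$ via a fiber-dimension comparison. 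Your route is more elementary and self-contained; the paper's is shorter given that Lemma~\ref{lemisofiber} is already in place for other purposes, though it leaves implicit the small step that a $\GL_N$-translate of a variety cannot be properly contained in another $\GL_N$-translate of the same variety.

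Two minor points worth tightening. First, your Step~2 is stated for all $V\in\sP^N_1$, but your reduction assumes every irreducible component of $V$ dominates $\A^1_k$; this is automatic for $V\in I_A$ (a vertical component would have infinitely many distinct $f_A$-iterates, contradicting finiteness of components), so just say so. Second, in the dimension argument you write $\pi(Z)\supseteq S$, but a priori some fibers $W\cap\pi^{-1}(s)$ could be empty; since $\pi(W)$ is constructible and dense it is cofinite, so $S\cap\pi(W)$ is still infinite and the rest goes through unchanged.
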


\begin{proof} Let $V_1,V_2$ be two elements in $I_A$. Then $V_1\cup V_2$ is also an element in $I_A$. 
 If $r_0(V_1)=r_0(V_2)$, then $r_0(V_1)=r_0(V_1\cup V_2).$   Lemma~\ref{lemisofiber} yields that 
$$\pi^{-1}(x)\cap V_1=\pi^{-1}(x)\cap (V_1\cup V_2)$$ 
for all $x\in \A^1(k).$ Then we have $V_1=V_2$, as desired. 
\end{proof}

\begin{defi}
\label{definition straight model}
We say that $A$ (or $f_A$) is \emph{straight}, if $r_0(I_A)=I_A^0$. 
\end{defi}

Lemma \ref{lemrzeinjia}, shows that $A$ is straight if and only if $I_A=\sigma(I_A^0)$, i.e., all invariant  subvarieties of $\A^1_k\times \A^N_k$ are of the form $\A^1_k\times W$.

For every $W\in \sP^N,$ denote by $G_W$ the subgroup  of $\GL_N$ which fixes $W.$ Let  
$$\G_A:=\bigcap_{W\in r_0(I_A)}G_W;$$ 
this is an algebraic subgroup of $\GL_N.$ Let $A'\in \G_A(k[x])$ be an element equivalent to $A$, i.e., $A'(x)=T^{-1}(x+1)A(x)T(x)$ where $T\in \GL_N(k[x])$. Then we have 
$$r_0(I_A')=T(0)^{-1}(r_0(I_A))\text{ and }\G_{A'}=T(0)^{-1}\G_AT(0).$$ 
So the conjugacy class of $\G_A$ in $\GL_N$ is an invariant in the equivalent class of $A$.

\begin{rem}
\label{remark straight 2}
Theorem \ref{thmautoinvar} (see also Remark~\ref{remark straight model}) yields  that for every $A\in \GL_N(k[x])$, there exists $A'\in \G_A(k[x])$ which is  straight and moreover, $A'$ and $A$ are equivalent.
\end{rem}

\begin{pro}\label{procristrga}
An element $A\in \GL_N(k[x])$ is straight if and only if $A(x)\in \G_A(k[x]).$
\end{pro}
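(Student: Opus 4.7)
The plan is to deduce Proposition~\ref{procristrga} formally from Lemma~\ref{lemrzeinjia} together with a direct unpacking of what $f_A$-invariance of a cylinder $\A^1_k\times W$ means in coordinates. The key tautology I will use is this: since $f_A(x,y)=(x+1,A(x)y)$, the equation $f_A(\A^1_k\times W)=\A^1_k\times W$ is equivalent to $A(x)(W)\subseteq W$ for every $x\in\A^1(k)$; running the same computation for $f_A^{-1}$ (which exists, $f_A$ being an automorphism) upgrades this to equality $A(x)(W)=W$ for all $x$. In other words, $\A^1_k\times W$ is $f_A$-invariant if and only if $A(x)\in G_W$ for every $x\in \A^1(k)$.

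For the ``only if'' direction, assume $A$ is straight, so that $r_0(I_A)=I_A^0$. For any $W\in r_0(I_A)$, the cylinder $\sigma(W)=\A^1_k\times W$ lies in $I_A$ by the definition of $I_A^0$, hence is $f_A$-invariant. By the observation above, $A(x)\in G_W$ for every $x\in \A^1(k)$. Intersecting over all $W\in r_0(I_A)$ yields $A(x)\in \G_A$ for every $x$, i.e., $A(x)\in \G_A(k[x])$.

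For the ``if'' direction, assume $A(x)\in \G_A(k[x])$ and let $V\in I_A$ be arbitrary, with $W:=r_0(V)=V_0$. Since $W\in r_0(I_A)$, the definition of $\G_A$ forces $\G_A\subseteq G_W$, so $A(x)(W)=W$ for every $x\in \A^1(k)$. The tautological observation then gives that $\sigma(W)=\A^1_k\times W$ is itself $f_A$-invariant, so $\sigma(W)\in I_A$. But $r_0(\sigma(W))=W=r_0(V)$, and Lemma~\ref{lemrzeinjia} asserts that $r_0|_{I_A}$ is injective; hence $V=\A^1_k\times W$. Since $V\in I_A$ was arbitrary, this shows $I_A=\sigma(I_A^0)$, which is the definition of $A$ being straight.

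There is no genuine obstacle here once Lemmas~\ref{lemisofiber} and \ref{lemrzeinjia} are in hand; the content of Proposition~\ref{procristrga} is essentially the coordinate-level observation that $\A^1_k\times W$ is $f_A$-invariant precisely when $A(x)$ stabilizes $W$ for every $x$, combined with the rigidity coming from injectivity of $r_0|_{I_A}$. The only point that deserves a moment of care is that $\G_A$ is defined as an intersection over $r_0(I_A)$ rather than over $I_A^0$, but this is harmless: in the ``only if'' direction we use that $r_0(I_A)$ equals $I_A^0$ under the hypothesis of straightness, while in the ``if'' direction we exploit that any $V\in I_A$ contributes its fiber $V_0$ to $r_0(I_A)$, which is exactly what lets us pair $\G_A\subseteq G_{V_0}$ with the tautological observation to promote $V$ itself into a product.
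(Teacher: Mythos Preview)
Your proof is correct and follows essentially the same approach as the paper's own argument: both directions hinge on the observation that $\A^1_k\times W$ is $f_A$-invariant precisely when $A(x)\in G_W$ for all $x$, and the ``if'' direction is completed via the injectivity of $r_0|_{I_A}$ from Lemma~\ref{lemrzeinjia}. Your write-up is in fact somewhat more careful than the paper's (which elides the explicit appeal to Lemma~\ref{lemrzeinjia} in the second direction), but the underlying strategy is identical.
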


\begin{proof}
First we suppose that $A$ is straight. For any $W\in I_A^0=r_0(I_A)$, we have that $\A^1_k\times W$ is invariant under $f_A$. It follows that $A(x)\in G_W(k[x])$. Then $$A(x)\in \bigcap_{W \in r_0(I_A)}G_W(k[x])=\G_A(k[x]).$$
If $A(x)\in \G_A(k[x])$, then for each $V\in I_A$, we have that $W:=r_0(V)$ is invariant under the action of $\G_A$. Then $\A^1_k\times W$ is invariant under the action of $f_A$. So, $\A^1_k\times W\in I_A$ and $r_0(\A^1_k\times W)\in I_A=V$. Therefore $V=\A^1_k\times W$, as claimed in the conclusion of Proposition~\ref{procristrga}.
\end{proof}

The next result yields a good criterion for when a point $\alpha\in \A^1(k)\times \A^N(k)$ has a Zariski dense orbit under $f$.

\begin{pro}\label{prostinva}Let $\alpha:=(a,b)\in \A^1(k)\times \A^N(k)$ and let $A\in \GL_N(k[x])$ be a straight linear transformation.  Then $\overline{\OO_{f_A}(\alpha)}=\A^1_k\times \overline{\G_A\cdot b}$. 
\end{pro}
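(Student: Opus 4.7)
The plan is to prove both inclusions of $\overline{\mathcal{O}_{f_A}(\alpha)}=\mathbb{A}^1_k\times\overline{\mathcal{G}_A\cdot b}$ using the two structural facts made available by straightness: Proposition~\ref{procristrga}, which gives $A(x)\in\mathcal{G}_A(k[x])$, and the definition of straightness, which forces every $f_A$-invariant $V\in\mathcal{P}_1^N$ to have the product form $\mathbb{A}^1_k\times W$.

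First I would handle the inclusion $\overline{\mathcal{O}_{f_A}(\alpha)}\subseteq\mathbb{A}^1_k\times\overline{\mathcal{G}_A\cdot b}$. Since $A(x)\in\mathcal{G}_A(k[x])$ by Proposition~\ref{procristrga}, for every specialization $x\in\mathbb{A}^1(k)$ the matrix $A(x)$ is a $k$-point of the algebraic subgroup $\mathcal{G}_A\subseteq\operatorname{GL}_N$. Writing $B_n:=A(a+n-1)\cdots A(a)\in\mathcal{G}_A(k)$, the iterates are $f_A^n(\alpha)=(a+n,B_n b)\in\mathbb{A}^1\times(\mathcal{G}_A\cdot b)$. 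Taking Zariski closures yields the inclusion.

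For the reverse inclusion, set $Z:=\overline{\mathcal{O}_{f_A}(\alpha)}$. Since the projection $\pi(Z)$ contains the infinite set $\{a+n:n\in\mathbb{N}_0\}$, at least one irreducible component of $Z$ dominates $\mathbb{A}^1$. Let $V$ be the union of all such components. Using $\pi\circ f_A=(x\mapsto x+1)\circ\pi$ one checks that $f_A$ permutes the components of $Z$ and preserves the property of dominating $\mathbb{A}^1$, so $f_A(V)=V$ and $V\in I_A$. Straightness then gives $V=\mathbb{A}^1_k\times W$ for some $W\subseteq\mathbb{A}^N_k$, with $W=r_0(V)\in r_0(I_A)$. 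By the defining formula $\mathcal{G}_A=\bigcap_{W'\in r_0(I_A)}G_{W'}$, we get $\mathcal{G}_A\subseteq G_W$, i.e.\ $\mathcal{G}_A\cdot W\subseteq W$.

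It remains to see that $b\in W$: only finitely many of the points $f_A^n(\alpha)$ can lie on components of $Z$ not dominating $\mathbb{A}^1$ (such components sit over a finite set of first coordinates), so the tail of the orbit lies in $V$, and applying $f_A^{-1}$ (an automorphism preserving $V$) brings us back to $\alpha\in V$, hence $b\in W$. Then $\mathcal{G}_A\cdot b\subseteq W$, so $\overline{\mathcal{G}_A\cdot b}\subseteq W$, and therefore
\[
\mathbb{A}^1_k\times\overline{\mathcal{G}_A\cdot b}\;\subseteq\;\mathbb{A}^1_k\times W\;=\;V\;\subseteq\;Z,
\]
which completes the equality. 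The whole argument is quite direct once straightness is in hand; the one point to be careful about is the $f_A$-invariance of the union of dominating components and the resulting conclusion that $\alpha\in V$ — the rest is essentially book-keeping with the definitions of $\mathcal{G}_A$ and $r_0$.
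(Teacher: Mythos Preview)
Your proof is correct and follows essentially the same approach as the paper's. The paper is a bit more direct: it observes that $\overline{\OO_{f_A}(\alpha)}$ is itself $f_A$-invariant and hence, by straightness, already equals $\A^1_k\times W$ for some $W\in r_0(I_A)$ --- this is legitimate because any $f_A$-invariant subvariety automatically lies in $I_A$ (a component not dominating $\A^1_k$ would sit over a single point, and iterating under $x\mapsto x+1$ would force infinitely many components). Your passage to the union $V$ of dominating components and the tail-of-orbit argument to place $\alpha$ in $V$ are therefore unnecessary, though not incorrect; once one knows $V=Z$, your argument collapses to the paper's.
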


\proof
Since $\A^1_k\times \overline{\G_A\cdot b}$ is $f_A$-invariant and $\alpha\in \A^1_k\times \overline{\G_A\cdot b}$, then $\overline{\OO_{f_A}(\alpha)}\subseteq \A^1_k\times \overline{\G_A\cdot b}$. Using that  $\overline{\OO_{f_A}(\alpha)}$ is $f_A$-invariant, we get that there exists $W\in r_0(I_A)$ such that $\overline{\OO_{f_A}(\alpha)}=\A^1_k\times W.$ By the definition of $\G_A$, we know that $W$ is $\G_A$-invariant. Since $b\in W$, we have $\overline{\G_A\cdot b}\subseteq W.$ Thus $\A^1_k\times \overline{\G_A\cdot b}\subseteq\overline{\OO_{f_A}(\alpha)}$, as desired.
\endproof

We are ready now to prove that each periodic subvariety under the action of $f_A$ has its period bounded depending only on $A$ (see Theorem~\ref{corperiod 0}).  
\begin{cor}\label{corperiod}
Let $V$ be a periodic subvariety of $f_A$ of period $m$. Then $m$ divides the number of connected components of $\G_A$. In particular, the period of each periodic subvariety of $\A^1\times \A^N$ under the action of $f_A$ is uniformly bounded by a constant depending only on $A$. 
\end{cor}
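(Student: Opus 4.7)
The plan is to reduce to the case where $A$ is straight and $V$ is irreducible, realize the orbit union $W:=\bigcup_{i=0}^{m-1}f_A^i(V)$ as a product $\A^1\times W_0$, and then read off $m$ from the permutation action of $\G_A$ on the irreducible components of $W_0$.

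First, I would reduce to the straight, irreducible case. By Remark~\ref{remark straight 2} there exists a straight $A'$ equivalent to $A$; the conjugation $h(x,y)=(x,T(x)y)$ implementing the equivalence takes $f_A$-periodic subvarieties to $f_{A'}$-periodic subvarieties of the same period, and yields $\G_{A'}=T(0)^{-1}\G_A T(0)$, so the two groups have the same number of connected components. We may therefore assume $A$ is straight. Reducibility is handled component-by-component: $f_A^m$ permutes the irreducible components $V_j$ of $V$, so each $V_j$ is $f_A$-periodic of some period $p_j$, and if the irreducible case gives $p_j\mid d$ for every $j$ then $m\mid\lcm_j(p_j)\mid d$, where $d$ is the number of connected components of $\G_A$.

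Next, I set $W:=\bigcup_{i=0}^{m-1}f_A^i(V)$, which is $f_A$-invariant; straightness gives $W=\A^1\times W_0$ for some $W_0\subseteq\A^N$. Since $\A^1$ is irreducible, every irreducible component of $\A^1\times W_0$ has the form $\A^1\times Y$ for $Y$ an irreducible component of $W_0$. Matching these with the $m$ distinct irreducible components $f_A^i(V)$ of $W$, I obtain $f_A^i(V)=\A^1\times Y_i$, and $\{Y_0,\ldots,Y_{m-1}\}$ is the complete set of irreducible components of $W_0$. Computing fibers in $f_A^{i+1}(V)=f_A(f_A^i(V))$ then yields $A(x)\cdot Y_i=Y_{i+1\bmod m}$ for every $x\in\A^1$ and every $i$.

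Finally, since $W_0\in r_0(I_A)$, the group $\G_A$ preserves $W_0$ and hence permutes its irreducible components, producing a homomorphism $\rho:\G_A\to S_{\{Y_0,\ldots,Y_{m-1}\}}\simeq S_m$ under which $\rho(A(x))$ is the $m$-cycle $(0\,1\,\cdots\,m-1)$ for every $x\in\A^1$. Because $S_m$ is discrete, $\rho$ kills the connected component $\G_A^0$ and factors through $\bar\rho:\G_A/\G_A^0\to S_m$. Moreover $x\mapsto A(x)\G_A^0$ is a morphism from the connected variety $\A^1$ to the finite group $\G_A/\G_A^0$, hence constant with a single value $\bar c$, and $\bar\rho(\bar c)$ is an $m$-cycle. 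Thus the order of $\bar c$ in $\G_A/\G_A^0$ is a multiple of $m$, and this order divides $|\G_A/\G_A^0|$, so $m\mid|\G_A/\G_A^0|$ as desired. The main obstacle is the identification in the middle step that the $Y_i$'s exhaust the irreducible components of $W_0$; this depends on straightness together with the irreducibility of $\A^1$. Once in place, the remainder is a short computation with the component group of $\G_A$.
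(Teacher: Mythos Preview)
Your proof is correct and follows essentially the same route as the paper: reduce to the straight case, form the $f_A$-invariant orbit union $W=\bigcup_i f_A^i(V)=\A^1\times W_0$, and read off $m$ from the transitive action of $\G_A$ (via $A(x)\in\G_A$ by Proposition~\ref{procristrga}) on the irreducible components $Y_i=r_0(f_A^i(V))$ of $W_0$, which factors through the finite component group $\G_A/\G_A^0$. Your reduction to irreducible $V$ is unnecessary, since in this paper periodic subvarieties are irreducible by definition; otherwise your argument is a more detailed version of the paper's, which simply asserts transitivity and orbit-stabilizer without spelling out the role of $A(x)$.
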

\begin{proof}
For each $i=0,\dots, m-1$, let $W_i:=r_0(f_A^i(V))$.  
We may assume that $A$ is straight (see Remark~\ref{remark straight 2}). Then $f^i(V)=\A^1_k\times W_i$. 
Since $\cup_{i=0}^{m-1}f_A^i(V)$ is invariant by $f_A$, then $\cup_{i=0}^{m-1}W_i$ is invariant by $\G_A$.  Therefore $\G_A$ acts on the set $\{W_0,\dots,W_{m-1}\}$ transitively, thus proving that $m$ must divide the number of components of $\G_A$. 
\end{proof}

\subsection{Straight forms when N is 2}
\label{subsection N=2}

In this section, let $f:\A^1\times \A^2\longrightarrow \A^1\times \A^2$ be an automorphism of the form $(x,y)\mapsto (x+1, A(x)y)$.

We say that an invariant subvariety $V$ of $f$ is nontrivial, if $V$ is not equal with $\A^1\times\{0\}$ or with $\A^1\times \A^2$.

\begin{lem}\label{lemdiacon}If $A=\left(\begin{array}{cc}a_1 & 0 \\0 & a_2\end{array}\right)$ where $a_1,a_2\in k^*$, then $f_A$ is straight.
\end{lem}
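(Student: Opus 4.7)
The plan is to apply Proposition~\ref{procristrga}: since $A$ is a constant matrix, proving that $f_A$ is straight reduces to showing $A \in \G_A$, i.e., that $A$ preserves the fiber $V_0$ of every $V \in I_A$. Using the relation $V_{x+1} = AV_x$, this is in turn equivalent to showing every invariant $V \in I_A$ has the product form $\A^1 \times V_0$. A preliminary step reduces to the case $V$ irreducible: $f_A$ permutes the finitely many irreducible components of $V$, and once the irreducible case is established for a suitable iterate $f_A^m$ (which has the same shape $(x,y) \mapsto (x+m, A^m y)$ with $A^m$ still diagonal and constant), each component takes the form $\A^1 \times W_i$; a full $f_A$-orbit of such components reassembles to $\A^1 \times (W \cup AW \cup \cdots \cup A^{m-1}W)$, which is again of the desired shape.

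The irreducible case splits by dimension. When $V$ is a hypersurface with irreducible defining polynomial $F \in k[x, y_1, y_2]$, invariance forces $f_A^* F = c F$ for some $c \in k^*$ (as $f_A^* F$ is again irreducible of the same degree). Writing $F = \sum f_{ij}(x)\, y_1^i y_2^j$ and equating coefficients gives $f_{ij}(x+1)\, a_1^i a_2^j = c\, f_{ij}(x)$. Comparing leading coefficients in $x$ forces $c = a_1^i a_2^j$ for every $(i,j)$ with $f_{ij} \neq 0$, and then $f_{ij}(x+1) = f_{ij}(x)$ makes each such $f_{ij}$ constant. Hence $F$ depends only on $y_1, y_2$ and $V = \A^1 \times \{F = 0\}$.

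When $V$ is a curve, I pass to the normalization of the projection $\pi|_V \colon V \to \A^1$ to obtain a finite map $\tilde V \to \A^1$ of smooth affine curves. The automorphism $f_A|_V$ lifts to an automorphism $\tilde f_A$ on $\tilde V$ covering the translation $x \mapsto x+1$, so the finite branch locus in $\A^1$ is translation-invariant and hence empty. Thus $\tilde V \to \A^1$ is finite \'etale with connected source, and the simple-connectedness of $\A^1$ in characteristic $0$ forces it to be an isomorphism. Consequently $V$ is birational to $\A^1$ via $\pi$, and admits a parameterization $\{(x, \phi_1(x), \phi_2(x))\}$ with $\phi_i \in k(x)$ satisfying $\phi_i(x+1) = a_i \phi_i(x)$. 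The zeros and poles of such a $\phi_i$ form a translation-invariant finite subset of $\A^1$, hence are empty; so $\phi_i$ is either zero or a unit in $k[x]$, i.e., a nonzero constant, and the functional equation $\phi_i = a_i \phi_i$ then forces $\phi_i$ to be a constant. Hence $V = \A^1 \times \{(\phi_1, \phi_2)\}$.

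The main obstacle is the curve case, where one must convert an abstract irreducible invariant curve into a concrete rational parameterization: this requires both the branch-locus argument to trivialize the cover $\tilde V \to \A^1$, and the observation that any rational function on $\A^1$ satisfying $\phi(x+1) = a\phi(x)$ with $a \in k^*$ must be either zero or a constant (with $a=1$). The hypersurface case, by contrast, is a direct polynomial calculation exploiting the constancy of $A$.
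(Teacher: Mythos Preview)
Your hypersurface case is essentially identical to the paper's argument for $\dim r_0(V)=1$. For the curve case you take a genuinely different route. The paper projects $V$ to each factor via $p_i:(x,y_1,y_2)\mapsto(x,y_i)$, notes that $p_i(V)\subset\A^1\times\A^1$ is invariant under $(x,y)\mapsto(x+1,a_iy)$, reruns the same polynomial computation in one fiber variable to obtain $p_i(V)=\A^1\times(\text{finite set})$, and then traps $V$ inside $\A^1\times\bigl(r_0(p_1(V))\times r_0(p_2(V))\bigr)$, forcing $V=\A^1\times r_0(V)$. Your normalization argument via the simple-connectedness of $\A^1$ is more structural and would adapt to higher $N$ (where coordinate projections no longer reduce to the hypersurface case), at the cost of heavier input; the paper's projection trick is completely elementary but specific to $N=2$.

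One small gap: you assert that $\tilde V\to\A^1$ is finite, but this is not automatic for a closed curve in $\A^1\times\A^2$ (e.g.\ $\{xy_1=1,\ y_2=0\}$ projects non-properly to $\A^1$). The fix uses the same translation-invariance idea you already deploy for the branch locus: take the closure $\bar V$ of $V$ in $\A^1\times\P^2$, so that $\bar V\to\A^1$ is proper and hence finite; since $f_A$ extends to an automorphism of $\A^1\times\P^2$ preserving the hyperplane at infinity, the finite set $\bar V\setminus V$ projects to a translation-invariant finite subset of $\A^1$, which must be empty. Thus $V\to\A^1$ is already finite, and so is $\tilde V\to\A^1$. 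With this patch your argument is complete.
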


\proof
Let $V$ be a nontrivial invariant subvariety of $f$. We need to show that $V=\A^1\times r_0(V)$, where $r_0$ is defined in \eqref{definition r_0}. We argue by contradiction; also, we may assume that all irreducible components of $V$ have the same dimension. Thus  there are only two cases to consider: $\dim r_0(V)=0,1$.

At first, we assume that $\dim r_0(V)=1$. In this case, $V$ is defined by a polynomial $P(x,y_1,y_2)\in k[x,y_1,y_2]\setminus k[x]$. 
There exists $q\in k^*$ such that $f_A^*P=qP$ i.e.
$$P(x+1, a_1y_1,a_2y_2)=qP(x,y_1,y_2).$$
Write $P=\sum_I a_I(x)y^I$, where $I$ is the multi-index and $a_I(x)$ is a polynomial in $k[x]$. 
We get $\sum_I a_I(x+1)a^Iy^I=\sum_Iqa_I(x)y^I.$
Then we have $$a_I(x+1)=a^{-I}qa_I(x).$$ Comparing the coefficient of the leading term, we have $a^{-I}q=1$ if $a_I(x)\neq 0.$ Thus $a_I(x)\in k$ for any $I$ and so, $V=\A^1\times r_0(V)$.

Now we assume $\dim r_0(V)=0$.

Denote by $p_i: \A^1\times \A^2\to \A^1\times \A^1$ the projection  mapping $(x,y_1,y_2)$ to $(x,y_i)$ and let $f_i:  \A^1\times \A^1\to \A^1\times \A^1$ be the morphism $(x,y)\mapsto (x,a_iy).$ Then we have $p_i\circ f_A=f_i\circ p_i$ and $p_i(V)$ is an invariant subvariety of $f_i$ of codimension $1$ (for each $i=1,2$).

In this case, $p_i(V)$ is defined by a polynomial $P_i(x,y)\in k[x,y]\setminus k[x]$. 
There exists $q_i\in k^*$ such that $f_i^*P_i=q_iP_i$ i.e.
$$P_i(x+1, a_iy)=q_iP(x,y_i).$$
Write $P_i=\sum_j a_j(x)y^j$, where $a_i(x)$ is a polynomial in $k[x]$. 
We get $\sum_j a_j(x+1)a_i^jy^j=\sum_jq_ia_j(x)y^j.$
Then we have $$a_j(x+1)=a_i^{-j}q_ia_j(x).$$ Comparing the coefficient of the leading term, we get $a_i^{-j}q_i=1$ if $a_j(x)\neq 0.$ Then $a_j(x)\in k$ for any $j$ and so, $p_i(V)=\A^1\times r_0(p_i(V))$. Furthermore, since $\dim r_0(V)=0$, then also $\dim r_0(p_i(V))=0$.

Then we have $V\subseteq p_1^{-1}(p_1(V))\cap p_2^{-1}(p_2(V))=\A^1\times (r_0(p_1(V))\times r_0(p_2(V)))$.
We note that $r_0(p_1(V))\times r_0(p_2(V))$ is finite and $r_0(V)\subseteq r_0(p_1(V))\times r_0(p_2(V))$. So $V=\A^1\times r_0(V)$, as desired.
\endproof

\begin{pro}\label{proinvariimsp}Let $f$ be  an automorphism  of $\A^1\times \A^2$ of the form $(x,y)\mapsto (x+1, A(x)y)$. If there exists a nontrivial invariant subvariety of $f$, then there exists $B=\left(\begin{array}{cc}a_1 & 0 \\0 & a_2\end{array}\right)$ for some $a_1,a_2\in k^*$ such that $f$ is equivalent to $f_B$.
\end{pro}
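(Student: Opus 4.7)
By Remark~\ref{remark straight 2}, we may replace $A$ by an equivalent straight matrix, so we assume $A(x) \in \G_A(k[x])$, where $\G_A$ is the intersection of the stabilizers of all $f_A$-invariant subvarieties. The hypothesis that $f_A$ admits a nontrivial invariant subvariety forces $\G_A$ to be a proper closed subgroup of $\GL_2$. Since every proper connected algebraic subgroup of $\GL_2$ is solvable (all proper connected subgroups of $\mathrm{PGL}_2$ are contained in a Borel), the connected component $\G_A^0$ lies in some Borel of $\GL_2$, and in particular admits a $\G_A^0$-invariant line. The proof now splits into three cases according to how $\G_A$ acts on the set of such lines.

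\textbf{Case 1} (\emph{$\G_A$ fixes a line}): After a constant conjugation bringing the fixed line to $\{y_2 = 0\}$, we have $A(x) = \begin{pmatrix} a(x) & b(x) \\ 0 & d(x) \end{pmatrix}$. Since $A(x) \in \GL_2(k[x])$, we have $a(x)d(x) \in k[x]^* = k^*$, so $a,d$ are nonzero constants. I then seek $c(x) \in k[x]$ such that $T(x) := \begin{pmatrix} 1 & c(x) \\ 0 & 1 \end{pmatrix}$ satisfies $T(x+1)^{-1}A(x)T(x) = \mathrm{diag}(a,d)$; a direct computation reduces this to the linear difference equation $d\,c(x+1) - a\,c(x) = b(x)$, which admits a polynomial solution by inspection of leading coefficients (in the case $a \neq d$, the operator $c \mapsto dc(x+1)-ac(x)$ preserves degree and is surjective on $k[x]$; in the case $a = d$, it maps degree $n+1$ surjectively onto degree $n$).

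\textbf{Case 2} (\emph{$\G_A$ fixes no line but $\G_A^0$ lies in a maximal torus}): Then $\G_A^0$ has exactly two invariant lines, which $\G_A$ must permute, so $\G_A$ is contained in $N(T)$ for some maximal torus $T$. After a constant conjugation, $A(x) \in N(T)(k[x]) = T(k[x]) \cup w T(k[x])$, with $w$ the nontrivial Weyl element. The pointwise condition on $A(x)$, together with the vanishing of products of polynomial entries, forces $A(x)$ to be globally diagonal or globally antidiagonal. Invertibility in $k[x]$ then forces the nonzero entries to be constants in $k^*$, so $A$ itself is a constant matrix; an antidiagonal element of $\GL_2(k)$ has distinct eigenvalues $\pm\sqrt{\alpha\beta}$ and is thus diagonalizable by a constant matrix.

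\textbf{Case 3} (\emph{$\G_A^0$ lies in the center $Z := k^* I$}): Then $\G_A/Z$ is a finite subgroup of $\mathrm{PGL}_2$. The morphism $x \mapsto [A(x)] \in \G_A/Z$ from the connected $\A^1$ to a finite discrete set is constant, so $A(x) = \lambda(x) g_0$ for some fixed $g_0 \in \G_A$ and a scalar polynomial $\lambda(x) \in k[x]$; the condition $\det A \in k^*$ yields $\lambda \in k^*$, so $A$ is constant. Since $A$ has finite order modulo the center, $A^n = cI$ for some $n \geq 1$ and $c \in k^*$, so $A$ satisfies the separable polynomial $X^n - c$ and is diagonalizable by a constant matrix. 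The main technical point is the difference-equation computation in Case 1; Cases 2 and 3 reduce to the elementary fact that an element of $\GL_2(k)$ whose $n$-th power is scalar is diagonalizable in characteristic zero.
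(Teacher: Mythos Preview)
Your argument is essentially correct and takes a more group-theoretic route than the paper, but there is one slip: the claim that \emph{every proper connected algebraic subgroup of $\GL_2$ is solvable} is false, since $\mathrm{SL}_2$ is a counterexample. What you actually need is that $\G_A^0$ has proper image in $\PGL_2$. This holds because $\G_A$ stabilizes the nontrivial subvariety $V_0:=r_0(V)\subset\A^2$, whereas any subgroup of $\GL_2$ surjecting onto $\PGL_2$ must contain $\mathrm{SL}_2$, and $\mathrm{SL}_2$ acts transitively on $\A^2\setminus\{0\}$ and hence stabilizes no nontrivial subvariety. With this correction your case analysis goes through. (One related imprecision: Case~2 should also assume $\G_A^0\not\subseteq Z$, so that there are \emph{exactly} two $\G_A^0$-invariant lines rather than all of them; otherwise you are already in Case~3.)

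By contrast, the paper's proof is more hands-on and does not invoke $\G_A$ beyond Theorem~\ref{thmautoinvar}. Starting from a single invariant $V=\A^1\times V_0$, it explicitly manufactures an invariant union of lines through the origin (the cone over $V_0$ when $\dim V_0=0$, or the cone over the points at infinity of $\overline{V_0}\subset\P^2$ when $\dim V_0=1$), cyclically permuted by $f$. The case of one line is exactly your Case~1. For $s\ge 2$ lines, the paper puts two of them on the coordinate axes and reads off from the permutation $L_1\mapsto L_2\mapsto L_3$ that $A$ must be a constant matrix, whose eigenvector then furnishes a single invariant line, reducing to the first case. Your approach makes the structure of $\G_A$ transparent and handles all invariant subvarieties simultaneously; the paper's is a bit more elementary, avoiding any classification of algebraic subgroups of $\GL_2$ in favor of explicit coordinate computations.
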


\proof
Let $V$ be a nontrivial invariant subvariety of $f$.  We may assume that the dimension of all irreducible components of $V$ are the same.
By Theorem \ref{thmautoinvar}, we may suppose that $V=\A^1\times V_0$ where $V_0$ is a subvariety of $\A^2$ which is invariant under $A(x)$ for all $x\in k$. 

First, we observe that there exists $W_0:=\cup_{i=1}^sL_i\subseteq \A^2$ where $L_i$ are distinct lines passing through the origin such that $W=\A^1\times W_0$ is invariant under $f$.
If $\dim V_0=0$, we may take $W_0$ be the union of lines passing through the origin and a point in $V_0$ (other than the origin). 
If $\dim V_0=1$, we consider the standard embeding $\A^2\subseteq \P^2$ and then we may take $W_0$ be the union of lines passing through the origin and a point in the intersection of the Zariski closure of $V_0$ (in $\P^2$) and the line at infinity.

Now we may assume that $V$ takes form $V=\A^1\times \left(\bigcup_{i=1}^sL_i\right)=\bigcup_{i=1}^s\A^1\times L_i$. Moreover, we may assume that $f(\A^1\times L_i)=\A^1\times L_{i+1}$ for $i=1,\dots,s$ (where, by convention, we let $L_{s+1}:=L_1$).

\medskip

We have two cases: either $s=1$ or $s\ge 2$.

{\bf Case $s=1$.} In suitable coordinates, we may assume that $L_1$ is defined by $y_2=0$. Then with respect to these coordinates, we may further assume that 
$$A(x)=\left(\begin{array}{cc}a_1(x) & b(x) \\0 & a_2(x)\end{array}\right)$$ 
where $a_1(x),a_2(x), b(x)\in k[x].$ Because $\det A(x)=a_1(x)a_2(x)$ is a nonzero constant in $k[x],$ we have $a_1:=a_1(x)$ and $a_2:=a_2(x)$ are constants in $k^*$. We may assume that $b\neq 0$. Set $d:=\deg b(x)\geq 0.$ Denote by $k[x]_d$ the vector space of polynomials of degree at most $d$.

If $a_1\neq a_2$, consider the linear map $T: k[x]_d\to k[x]_d$ defined by $T:P(x)\mapsto a_2P(x+1)-a_1P(x)$. Next we analyze the leading term of $T(P)$; we have $\deg(T(P))=\deg(P)$. So $T$ is injective and therefore, it must be surjective as well. Hence there exists $u\in k[x]_d$, such that $T(u(x))=b(x).$
Let $U:\A^1\times \A^2\to \A^1\times \A^2$ be the automorphism of the form $$(x,y_1,y_2)\mapsto (x, y_1+u(x)y_2,y_2);$$ then $U^{-1}\circ f\circ U=f_B$ where 
$$B=\left(\begin{array}{cc}a_1 & 0 \\0 & a_2\end{array}\right).$$
\medskip

If $a:=a_1=a_2$, consider the linear map $T: k[x]_{d+1}\to k[x]_{d}$ defined by $T:P(x)\mapsto aP(x+1)-aP(x)$. If $v(x)\in \ker(L)$, we have $v(x+1)=v(x)$. Then $v(x)\in k$. It follows that $\ker T=k$. Since $\dim k[x]_{d+1}= \dim k[x]_{d}+1$, we obtain that $T$ is surjective. Hence there exists $u(x)\in k[x]_{d+1}$ such that $T(u)=b(x)$.
Let $U:\A^1\times \A^2\to \A^1\times \A^2$ be the automorphism given by $$(x,y_1,y_2)\mapsto (x, y_1+u(x)y_2,y_2);$$ 
we obtain that $T^{-1}\circ f\circ T=f_B$ where 
$$B=\left(\begin{array}{cc}a & 0 \\0 & a\end{array}\right).$$
\medskip

{\bf Case $s\geq 2$.} Then, in suitable coordinates, we may assume that $L_1$ is defined by $y_2=0$ and $L_2$ is defined by $y_1=0$. 
So, with respect to these coordinates, we may assume that 
$$A(x)=\left(\begin{array}{cc}0 & b(x) \\c(x) & d(x)\end{array}\right)$$ 
where $b(x),c(x), d(x)\in k[x].$ Because $\det A(x)=-b(x)c(x)$ is a nonzero constant in $k[x],$ we have $b:=b(x)$ and $c:=c(x)$ are constants in $k^*$. Then we have 
$$f(\A^1\times L_2)=\{(x, tb, td(x-1))\colon x,t\in k\}.$$
We note that $f(\A^1\times L_2)=\A^1\times L_3$. Therefore $d(x-1)$ must be a constant; so,  set $d:=d(x)\in k.$ Then we have $A(x)=\left(\begin{array}{cc}0 & b \\c & d\end{array}\right).$
Let $v$ be an eigenvector of $A$ in $k^2\setminus \{0\}$. Denote by $L$ the line in $\A^2$ spaned by $v$. Then $\A^1\times L$ is invariant by $f$. Then we reduced to the case $s=1$ and conclude our proof.
\endproof

Proposition \ref{proinvariimsp} implies the following result immediately.

\begin{cor}\label{corinvalin}If there exists a nontrivial invariant subvariety of $f$, then there exist an invariant trivial subbundle of rank 1 in the vector bundle $\A^1\times \A^2$ over $\A^1$. 

In other words,
there exist $a(x),b(x)\in k[x]$ satisfying $\gcd(a(x),b(x))=1$ and $c\in k^*$ such that $f(x,a(x),b(x))=(x+1,ca(x+1),cb(x+1))$.
\end{cor}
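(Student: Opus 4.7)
The plan is to deduce Corollary \ref{corinvalin} directly from Proposition \ref{proinvariimsp} by unpacking what the equivalence to a diagonal form gives us at the level of conjugating matrices. So suppose there exists a nontrivial invariant subvariety of $f$; by Proposition \ref{proinvariimsp}, there exist $a_1, a_2 \in k^*$ and $T \in \GL_2(k[x])$ such that, writing $B := \mathrm{diag}(a_1, a_2)$, we have $B = T(x+1)^{-1} A(x) T(x)$, equivalently
\[
A(x) T(x) = T(x+1) \, B.
\]

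First I would read off the first column of this matrix identity. Write $T(x) = \bigl[\, v_1(x) \mid v_2(x) \,\bigr]$ with $v_1(x) = \bigl(a(x), b(x)\bigr)^{t} \in k[x]^{2}$. Then the identity above applied to the first standard basis vector yields $A(x)\, v_1(x) = a_1 \, v_1(x+1)$, i.e.,
\[
f\bigl(x, a(x), b(x)\bigr) \;=\; \bigl(x+1,\; a_1\, a(x+1),\; a_1\, b(x+1)\bigr),
\]
so taking $c := a_1 \in k^*$ yields the claimed identity.

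Next, I need to check $\gcd(a(x), b(x)) = 1$. Since $T \in \GL_2(k[x])$, the determinant $\det T(x) \in k^*$ is a nonzero constant. But expanding the determinant along the first column gives $\det T(x) = a(x)\, T_{22}(x) - b(x)\, T_{12}(x)$, so any common factor of $a(x)$ and $b(x)$ must divide $\det T(x) \in k^*$. Hence $\gcd(a(x), b(x)) = 1$, which is equivalent to saying that $a(x)$ and $b(x)$ have no common zero. Consequently the section $x \mapsto (a(x), b(x))$ of $\mathbb{A}^1 \times \A^2 \to \A^1$ is nowhere vanishing, and therefore the subbundle
\[
\bigl\{ (x, t\, a(x), t\, b(x)) : x \in \A^1,\; t \in k \bigr\} \;\subset\; \A^1 \times \A^2
\]
is a rank-$1$ trivial subbundle, which is $f$-invariant by the displayed identity. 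This gives both forms of the statement of the corollary.

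There is no substantive obstacle here; the only point requiring any care is the reduction of the gcd condition to the fact that $T$ has constant determinant, which in turn comes from $T \in \GL_2(k[x])$ in the definition of equivalence. Everything else is a direct reformulation of the first column of the conjugation identity supplied by Proposition \ref{proinvariimsp}.
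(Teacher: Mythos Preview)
Your proof is correct. Both you and the paper deduce the corollary from Proposition~\ref{proinvariimsp}, but the extractions differ slightly. The paper first observes that in the diagonal model $f_B$ the coordinate line $y_1=0$ is an $f_B$-invariant rank-$1$ subbundle, and then, to obtain the explicit section for the original $f$, argues separately that any invariant rank-$1$ subbundle $L$ is trivial (since $\Pic(\A^1)=0$) and that the ratio $f(s)/s$ of a nowhere-vanishing section is a nowhere-vanishing function on $\A^1$, hence a constant $c\in k^*$. You instead read off the first column of the conjugating matrix $T$ directly, obtaining the nowhere-vanishing section and the constant $c=a_1$ in one step, with the coprimality of $a(x),b(x)$ coming from $\det T\in k^*$. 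Your route is a touch more explicit and avoids invoking $\Pic(\A^1)$; the paper's route has the small advantage that its second paragraph establishes the general equivalence ``invariant rank-$1$ subbundle $\Leftrightarrow$ section as in the statement'' rather than only the particular instance coming from the diagonalization.
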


\proof
Assume that there exists a nontrivial invariant subvariety of $f$.
By Proposition \label{proinvariimsp}, we may assume that $f=f_B$ where $B=\left(\begin{array}{cc}a_1 & 0 \\0 & a_2\end{array}\right)$ for some $a_1,a_2\in k^*$. Then the subbundle of rank 1 defined by $y_1=0$ is invariant by $f$.

Now suppose that $L$ is an invariant subbundle of rank 1 in the vector bundle $\A^1\times \A^2$ over $\A^1$. Since $\Pic(\A^1)=\{0\}$, $L$ is trivial. There exists a everywhere nonzero section $s$ of $L$ i.e. there exist $a(x),b(x)\in k[x]$ satisfying $\gcd(a(x),b(x))=1$ and $c\in k^*$ such that $(x,a(x),b(x))\in L$ for all $x\in A^1.$ Since $L$ is invariant by $f$, the image of $f(s)$ under $f$ is also a nonzero section of $L$. Since $f(s)/s$ is a everywhere nonzero function on $\A^1$, it is constant. In other words $f(x,a(x),b(x))=(x+1,ca(x+1),cb(x+1))$ for some $c\in k^*$.
\endproof

We conclude by giving an example of an automorphism $f$ which has no nontrivial invariant subvariety.

\begin{pro}\label{proexanoin}If $A=\left(\begin{array}{cc}1 & 1 \\x & x+1\end{array}\right)$, then $f_A$ has no nontrivial invariant subvariety.
In particular, $f_A$ is straight.
\end{pro}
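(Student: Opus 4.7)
The plan is to argue by contradiction, using Corollary~\ref{corinvalin} to reduce to a concrete polynomial problem, and then finish with a leading-coefficient count. Suppose $f_A$ admits a nontrivial invariant subvariety. By Corollary~\ref{corinvalin}, there exist $a(x), b(x) \in k[x]$ with $\gcd(a(x), b(x)) = 1$ and a constant $c \in k^*$ such that $A(x)(a(x), b(x))^{T} = c\,(a(x+1), b(x+1))^{T}$. For the specific matrix $A = \left(\begin{smallmatrix} 1 & 1 \\ x & x+1 \end{smallmatrix}\right)$, this amounts to the two polynomial identities
\begin{equation*}
a(x) + b(x) = c\, a(x+1) \quad \text{and} \quad x\, a(x) + (x+1)\, b(x) = c\, b(x+1).
\end{equation*}

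Next I would eliminate $b$ in favor of $a$. The first identity gives $b(x) = c\,a(x+1) - a(x)$, and hence $b(x+1) = c\,a(x+2) - a(x+1)$. Substituting both expressions into the second identity and simplifying should produce the second-order linear recurrence
\begin{equation*}
c^2\, a(x+2) \;-\; c(x+2)\, a(x+1) \;+\; a(x) \;=\; 0.
\end{equation*}

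The decisive step is a leading-coefficient analysis of this recurrence. Assume $a \neq 0$, and let $d := \deg a$ with leading coefficient $\alpha \in k^*$. Then $c^2 a(x+2)$ and $a(x)$ both have degree $d$, while the middle term $-c(x+2)\,a(x+1)$ has degree $d+1$ with leading coefficient $-c\alpha$. Reading off the coefficient of $x^{d+1}$ in the recurrence forces $c\alpha = 0$, contradicting $c, \alpha \in k^*$. So $a=0$, and then $b(x) = c\,a(x+1) - a(x) = 0$, contradicting $\gcd(a,b) = 1$. This rules out any nontrivial invariant subvariety, and the ``in particular'' clause is immediate: the only invariant subvarieties left are $\A^1 \times \{0\}$ and $\A^1 \times \A^2$, each already of the form $\A^1 \times W$, so $f_A$ is straight. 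I don't anticipate any serious obstacle beyond the bookkeeping involved in deriving the recurrence, which is a routine expansion.
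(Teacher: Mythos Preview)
Your argument is correct and follows essentially the same route as the paper: invoke Corollary~\ref{corinvalin}, write out the two polynomial identities, and obtain a contradiction by a degree/leading-coefficient count. The only cosmetic difference is that you eliminate $b$ to get a second-order recurrence in $a$ alone, whereas the paper keeps both $a$ and $b$ and compares their degrees directly via the relation $cb(x+1)-b(x)=cxa(x+1)$; the underlying idea is the same.
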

\proof
If $f_A$ has a nontrivial invariant subvariety, then 
by Corollary \ref{corinvalin},  there exist $a(x),b(x)\in k[x]$ satisfying $\gcd(a(x),b(x))=1$ and $c\in k^*$ such that $$f(x,a(x),b(x))=(x+1,ca(x+1),cb(x+1)).$$

It follows that $a(x)+b(x)=ca(x+1)$ and $xa(x)+(x+1)b(x)=cb(x+1).$ Then, combining these two equalities, we get:  $$b(x)=cb(x+1)-x(a(x)+b(x))=cb(x+1)-cxa(x+1).$$ 
We have then
$$\deg b(x)\geq \deg (cb(x+1)-b(x))=\deg (cxa(x+1))=1+ \deg a(x).$$
It follows that $$\deg b(x)=\deg (a(x)+b(x))=\deg(ca(x+1))=\deg a(x)\leq \deg b(x)-1.$$
Then we get a contradiction.
\endproof

\rem
Let $A=\left(\begin{array}{cc}1 & 1 \\x & x+1\end{array}\right)$. Proposition \ref{proexanoin} yields that $A$ is not equivalent to a constant matrix. 
\endrem



\end{document}